\documentclass[11pt,a4paper,oneside,reqno]{amsart}

\usepackage{paquet_en}

\title{Geodesic simplices of pseudo-hyperbolic space}
\author[T. Lemistre]{Timoth\'e Lemistre}
\address{LJAD, Universit\'e C\^ote d'Azur, Nice, France}
\email{timothe.lemistre@proton.me}
\date{\today}

\begin{document}
	
\begin{abstract}
	We give a cohomological interpretation of the geodesic simplices of the pseudo-hyperbolic space of signature $(p,q)$ and formulate a necessary and sufficient condition for such a simplex to have finite volume. As a corollary, we obtain that every ideal geodesic polytope in the pseudo-hyperbolic space of signature $(2,2)$ has finite volume.
\end{abstract}

\maketitle
{\setlength{\parskip}{0pt}
\tableofcontents}

\section*{Introduction}

A central fact in hyperbolic geometry is that the geodesic simplices of the hyperbolic space $\H^p$ have volume bounded above by a constant $\bf{v}(\H^p)$ depending only on $p$. Moreover, the maximal volume is attained only for the regular ideal geodesic simplex in $\H^p$ (result of Haagerup-Munkholm \cite{HM81} for $p \geq 4$). This fact is for example crucial in Gromov's foundational article on simplicial volume and bounded cohomology \cite{Gromov82}, in which he proves that for every closed hyperbolic $p$-manifold $M$, denoting by $\Vol(M)$ its volume and by $\norm{M}$ its simplicial volume, one has
\[\frac{\Vol(M)}{\norm{M}} = \bf{v}(\H^p). \]
He deduces from this an elegant proof of Mostow’s rigidity theorem for hyperbolic $p$-manifolds, using rigidity results of Thurston \cite{Thurston1980} and the characterisation of the geodesic simplices of maximal volume.

\textbf{Pseudo-hyperbolic space.}
In this article, we study geodesic simplices in the pseudo-hyperbolic space of signature $(p,q)$, denoted by $\H^{p,q}$. The latter is a pseudo-Riemannian manifold of constant curvature that generalises the hyperbolic space $\H^p$. The recent interest in this space comes from its central role in higher-rank Teichmüller theory (see in particular \cite{Mess90}, \cite{Barbot13}, \cite{BM12}, \cite{DGK18}, \cite{BK25}, \cite{BS10} et \cite{BS20}).

Let us call geodesic simplex, or more simply \emph{simplex} of $\H^{p,q}$ a set of $p+q+1$ points in $\P(\R^{p,q+1})$, spanning $\R^{p,q+1}$ and whose interior of the convex hull, in some well-chosen affine chart, is contained in $\H^{p,q}$. Such a convex hull is determined up to isometry by the simplex. A simplex is said to be \emph{ideal} if its points are in the projective boundary of $\H^{p,q}$. We deduce from our main theorem the following corollary (Proposition \ref{proposition:existence_simplexes_infinis} and Corollary \ref{corollaire:simplexe_H22_fini}). 

\textbf{\hyperref[corollaire:simplexe_H22_fini]{Main corollary}}. \textit{There exist simplices of infinite volume in $\H^{2,2}$ but none is ideal. For all integers $p \geq 3$ and $q \geq 1$, there exist simplices of infinite volume in $\H^{p,q}$, both ideal and non-ideal. }

Our main theorem is a finiteness criterion characterising simplices of infinite volume in $\H^{p,q}$. Given a simplex $\Lambda$ of $\H^{p,q}$, we define the undirected graph $\cal{G}(\Lambda)$ having $\Lambda$ as set of vertices and whose edges are the $\{\xi,\eta\}$ such that the lines $\xi$ and $\eta$ are not orthogonal. We can then read properties of $\Lambda$ on the graph $\cal{G}(\Lambda)$ : let us illustrate this. Given a subset $I$ of the set of vertices of a graph, denote by $\partial I$ the set of vertices that are not in $I$ but adjacent to a vertex of $I$. $I$ is said to be \emph{stable} if there is no edge between vertices of $I$.

\textbf{\hyperref[thm_principal]{Main theorem}}. \textit{Let $\Lambda$ be a simplex of $\H^{p,q}$. $\Lambda$ has infinite volume if, and only if, the graph $\cal{G}(\Lambda)$ contains a non-empty, stable set of vertices $I$ with cardinality at least $\Card \partial I$. }

We introduce a second tool : an interpretation of the isometry class of a simplex $\Lambda$ as an element of a (twisted) cohomology group associated to the graph $\cal{G}(\Lambda)$. This allows more generally to encode isometry classes of subsets of $\H^{p,q}$, which leaves hope for further applications.

\textbf{Organisation.} Section \ref{section:préliminaires} contains preliminaries of pseudo-hyperbolic geometry. Section \ref{section:classification} defines the cohomological interpretation and derives some consequences used in Section \ref{section:critère}. In Section \ref{section:critère}, we prove the \hyperref[thm_principal]{main theorem} and derive consequences from it. We show for example that every graph corresponding to an ideal simplex of $\H^{2,2}$ contains a $5$-cycle and deduce from this the \hyperref[corollaire:simplexe_H22_fini]{main corollary}.

\textbf{Acknowledgements.} I would like to express my gratitude to my advisors, Francesco Bonsante and Jérémy Toulisse, for their support and patience. I am also indebted to Enrico Trebeschi, Arthur Mazeyrat, Andrea Tamburelli and Pierre-Louis Blayac for interesting discussions about the content of this article and for their valuable comments.

\section{Preliminaries}\label{section:préliminaires}

From now on, let $p > 0$ and $q \geq 0$ be integers. We denote by $\R^{p,q+1}$ the oriented vector space $\R^{p+q+1}$ endowed with a non-degenerate symmetric bilinear form $\bf{b}$ of signature $(p,q+1)$, and by $\O(p,q+1)$ its orthogonal group. The group $\PO(p,q+1) = \O(p,q+1)/\{\pm \id\}$ acts on the projective space $\P(\R^{p,q+1})$ preserving an open set, the \emph{pseudo-hyperbolic space}, defined by
\[\H^{p,q} = \{[x] \in \P(\R^{p,q+1}) \mid \bf{b}(x,x) < 0\}. \]
The stabiliser of a point of $\H^{p,q}$ under the action of $\PO(p,q+1)$ is conjugate to $\PO(p,q)$. Define a double covering of $\H^{p,q}$ by
\[\Hc^{p,q} = \{x \in \R^{p,q+1} \mid \bf{b}(x,x) = -1\}. \]
We have $\Hc^{p,q}/{\pm \id} = \H^{p,q}$. A tangent space $\Tan_x \Hc^{p,q}$ is identified with $\Vect{x}^\perp$, hence inherits the metric $\bf{b}$ from $\R^{p,q+1}$. This endows $\Hc^{p,q}$ with a metric of signature $(p,q)$ which descends to a metric on $\H^{p,q}$. The metrics on $\Hc^{p,q}$ and $\H^{p,q}$ have constant curvature $-1$ and respective isometry groups $\O(p,q+1)$ and $\PO(p,q+1)$. Their respective Levi-Civita connections are geodesically complete, the geodesics of $\H^{p,q}$ being its inter"s with projective lines.

\textbf{Subsets of the projective space and graphs.}\label{définition:graphe}
By a \emph{graph} (with vertex set $\cal{S}$) we mean a pair $\cal{G} = (\cal{S},\cal{A})$ where $\cal{S}$ is a set (whose elements are called vertices) and $\cal{A}$ is a set of subsets of $\cal{S}$ of the form $\{\xi,\eta\}$ with $\xi$ and $\eta$ two vertices (possibly equal). The elements of $\cal{A}$ are called edges, and edges of the form $\{\xi,\xi\}$ are called loops. Two vertices $\xi,\eta$ (possibly equal) are said to be adjacent if $\{\xi,\eta\}$ is an edge. A morphism from a graph $(\cal{S}_0,\cal{A}_0)$ to a graph $(\cal{S}_1,\cal{A}_1)$ is a map $\phi : \cal{S}_0 \to \cal{S}_1$ such that for every edge ${\xi,\eta} \in \cal{A}_0$, one has $\{\phi(\xi),\phi(\eta)\} \in \cal{A}_1$.

In what follows, $\Lambda$ will be a subset $\Lambda$ of $\P(\R^{p,q})$, possibly infinite. To such a subset, we associate the graph $\cal{G}(\Lambda) = (\Lambda,\cal{A}(\Lambda))$ where $\cal{A}(\Lambda)$ is the set of edges $\{\xi,\eta\} \subset \Lambda$ such that the lines $\xi$ and $\eta$ are not orthogonal. We give examples of such graphs in Section \ref{subsection:exemples}. The set $\Lambda$ is said to be \emph{ideal} if $\cal{G}(\Lambda)$ has no loops, that is if $\Lambda$ is contained in the projective boundary of $\H^{p,q}$, denoted
\[\partial \H^{p,q} = \{[x] \in \P(\R^{p,q+1}) \mid \bf{b}(x,x) = 0\}. \]

\begin{definition*}
Let $\Lambda$ be a subset of $\P(\R^{p,q+1})$. We call \emph{lift} of $\Lambda$ the data, for every line $\xi \in \Lambda$, of a non-zero vector of $\xi$. In other words, a lift of $\Lambda$ is a section of the projectivisation $\P : \R^{p,q+1} \setminus \{0\} \to \P(\R^{p,q+1})$ over $\Lambda$. A lift $s$ is said \emph{\positif}\! if the convex hull $\Conv(s(\Lambda))$ does not contain $0$ and we have $\bf{b}(s(\xi),s(\eta)) \leq 0$ for all $\xi,\eta$ in $\Lambda$. If $s$ is \positif\!, then the projectivisation of $\Conv(s(\Lambda))$ is well-defined and its interior is an open subset of $\H^{p,q}$ which we denote $\Conv(s)$. 
\end{definition*}

\begin{lemme}
If $\Lambda$ admits a lift $s$ such that $\bf{b}(s(\xi),s(\eta)) \leq 0$ for all $\xi,\eta \in \Lambda$, then $\Lambda$ admits a \positif lift.
\end{lemme}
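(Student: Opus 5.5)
The idea is to show that a non-zero vector can only be cancelled in a positive combination by vectors orthogonal to all of $\Lambda$, and then to fix those by sign changes. Sign changes on such vectors cannot break the inequalities $\bf{b}\leq 0$.

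\textbf{Step 1: vanishing combinations only involve totally orthogonal vertices.} Let $s$ be any lift with $\bf{b}(s(\xi),s(\eta))\leq 0$ for all $\xi,\eta\in\Lambda$. Suppose $0\in\Conv(s(\Lambda))$. Then there are finitely many distinct $\xi_1,\dots,\xi_k\in\Lambda$ and $t_i>0$ with $\sum_i t_i s(\xi_i)=0$. For any $\eta\in\Lambda$ we get
\[0=\bf{b}\Big(s(\eta),\sum_i t_i s(\xi_i)\Big)=\sum_i t_i\,\bf{b}(s(\eta),s(\xi_i)).\]
Every term on the right is $\leq 0$, so each term vanishes. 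Hence $\bf{b}(s(\eta),s(\xi_i))=0$ for all $\eta\in\Lambda$ and all $i$. Taking $\eta=\xi_i$ shows that $\xi_i$ is isotropic.

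Let $I\subset\Lambda$ be the set of lines orthogonal to every line of $\Lambda$, including themselves. These are exactly the vertices of $\cal{G}(\Lambda)$ that have no edge at all, loops included. Step 1 shows that any positive combination of vectors of $s(\Lambda)$ equal to $0$ only uses vectors $s(\xi)$ with $\xi\in I$. This conclusion holds for every lift satisfying $\bf{b}\leq 0$.

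\textbf{Step 2: sign normalisation on $I$.} Replacing $s(\xi)$ by $-s(\xi)$ for $\xi\in I$ preserves $\bf{b}(s(\cdot),s(\cdot))\leq 0$, since all pairings involving $\xi$ are $0$. I will use such sign changes to make $\Conv(s(I))$ avoid $0$. Since $I$ may be infinite, even uncountable, a single linear form might vanish on some lines of $I$. So I use a lexicographic choice instead. Fix a basis $e_1,\dots,e_n$ of $\R^{p,q+1}$ and define $s'$ as follows. For $\xi\in I$, choose the sign of $s'(\xi)=\pm s(\xi)$ so that its first non-zero coordinate is positive. For $\xi\notin I$, set $s'(\xi)=s(\xi)$.

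Take a positive combination $\sum_i t_i s'(\xi_i)$ with all $\xi_i\in I$, and let $m$ be the smallest index at which some $s'(\xi_i)$ has a non-zero coordinate. The $m$-th coordinate of the sum is a positive combination of numbers that are $\geq 0$, not all zero. So this coordinate is positive and the sum is non-zero.

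\textbf{Step 3: conclusion.} The lift $s'$ still satisfies $\bf{b}\leq 0$, so Step 1 applies to it. Any vanishing positive combination from $s'(\Lambda)$ would therefore only involve vectors $s'(\xi)$ with $\xi\in I$, and Step 2 excludes this. Hence $0\notin\Conv(s'(\Lambda))$, and $s'$ is a \positif lift.

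The main obstacle is Step 1: one has to notice that testing the relation against an arbitrary $\eta\in\Lambda$ forces every vector in the relation to be orthogonal to all of $\Lambda$. It is this global orthogonality, and not just mutual orthogonality of the vectors in the relation, that makes the sign changes in Step 2 harmless.
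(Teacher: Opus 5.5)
Your proof is correct and is essentially the paper's argument. Your Step 1 is the paper's key step verbatim: pairing a vanishing positive combination against an arbitrary $s(\eta)$ forces every $\xi_i$ to be an isolated vertex. Your lexicographic sign normalisation is a concrete instance of the paper's choice of a total order $\prec$ making $\R^{p,q+1}$ an ordered vector space, and you also make explicit two points the paper leaves implicit: that sign changes on $I$ preserve $\bf{b}\leq 0$, and that Step 1 is re-applied to the modified lift.
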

\begin{proof}
It suffices to modify $s$ so that $\Conv(s(\Lambda))$ does not contain $0$. Endow $\R^{p,q+1}$ with a strict order $\prec$ making it into an ordered vector space. For each isolated vertex $\xi$ of $\cal{G}(\Lambda)$, up to multiplying $s(\xi)$ by $-1$, we may assume that $0 \prec s(\xi)$. Suppose now that there exists a relation $0 = \sum_{i \in I} t_i s(\xi_i)$ with $t_i > 0$, $\xi_i \in \Lambda$, and $I$ a non-empty finite subset of $\Lambda$. Given an element $\eta$ of $\Lambda$, we have
\[0 = \bf{b}(s(\eta),\sum_{i \in I} t_i s(\xi_i)) = \sum_i t_i \bf{b}(s(\eta),s(\xi_i)). \]
Since $t_i > 0$ and $\bf{b}(s(\eta),s(\xi_i)) \leq 0$, it follows that $\bf{b}(s(\eta),s(\xi_i)) = 0$ for every $i$ : thus $\eta$ is not adjacent to any of the $\xi_i$. Hence the $\xi_i$ are isolated vertices of the graph $\cal{G}(\Lambda)$, so $0 \prec s(\xi_i)$, and therefore $0 \prec \sum_i t_i s(\xi_i) = 0$, which is absurd. Thus $0$ does not belong to $\Conv(s(\Lambda))$ and $s$ indeed defines a \positif lift.
\end{proof}

\textbf{Convex hulls.}\label{définition:enveloppe_convexe}
A \emph{convex hull} of a subset $\Lambda$ of $\P(\R^{p,q+1})$ is any open subset of $\H^{p,q}$ of the form $\Conv(s)$, where $s$ is a \positif lift of $\Lambda$.
\begin{lemme}[uniqueness of the convex hull]\label{lemme:relèvement_unique}
Assume that $\Vect{\Lambda}$ is non-degenerate. Two convex hulls of $\Lambda$ are identified by an element of $\PO(p,q+1)$.
\end{lemme}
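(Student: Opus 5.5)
Let $s$ and $s'$ be two \positif lifts of $\Lambda$. Since both are sections of $\P$ over $\Lambda$, we can write $s'(\xi) = \lambda(\xi) s(\xi)$ with $\lambda(\xi) \in \R^*$ for every $\xi \in \Lambda$. The plan is to find $g \in \O(p,q+1)$ with $g(s(\xi)) = \epsilon\, s'(\xi)$ for all $\xi$, where $\epsilon \in \{\pm 1\}$ is constant on each connected component of $\cal{G}(\Lambda)$, possibly after modifying $g$. Then $g$ maps $\Conv(s(\Lambda))$ onto $\Conv(s'(\Lambda))$ up to these signs, and we must check that its projectivisation, an element of $\PO(p,q+1)$, sends $\Conv(s)$ onto $\Conv(s')$.

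\textbf{Step 1: signs of $\lambda$ along edges.} For an edge $\{\xi,\eta\}$ with $\xi\neq\eta$, both $\bf{b}(s(\xi),s(\eta))$ and $\bf{b}(s'(\xi),s'(\eta)) = \lambda(\xi)\lambda(\eta)\bf{b}(s(\xi),s(\eta))$ are strictly negative. Hence $\lambda(\xi)\lambda(\eta) > 0$, so the sign of $\lambda$ is constant on each connected component of $\cal{G}(\Lambda)$.

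\textbf{Step 2: construct $g$ component by component.} Let $\Lambda = \bigsqcup_k \Lambda_k$ be the decomposition into connected components. Distinct components are mutually orthogonal, so $\Vect{\Lambda} = \bigoplus_k \Vect{\Lambda_k}$ is an orthogonal sum. Non-degeneracy of $\Vect{\Lambda}$ then forces each $V_k = \Vect{\Lambda_k}$ to be non-degenerate, since a vector in the radical of $V_k$ is orthogonal to all of $\Vect{\Lambda}$. The key point is the following: if $\lambda$ had \emph{constant absolute value} on $\Lambda_k$, then $v \mapsto \lambda\, v$ on $V_k$ would be an isometry only if $\abs{\lambda}=1$. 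So we do not want to rescale; we want $g$ to realise $s(\xi)\mapsto \pm s'(\xi)$ directly. For this, I would show that the map $s(\xi) \mapsto \sigma_k s'(\xi)$, with $\sigma_k$ the sign of $\lambda$ on $\Lambda_k$, extends to a linear isometry $V_k \to V'_k$.

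The difficulty is that the Gram matrices of $s$ and $s'$ need not coincide: $\bf{b}(s'(\xi),s'(\eta))=\lambda(\xi)\lambda(\eta)\bf{b}(s(\xi),s(\eta))$. So I should reconsider what ``identified by an element of $\PO(p,q+1)$'' requires: the image $g(\Conv(s))$ must equal $\Conv(s')$ as subsets of $\H^{p,q}$. Projectively, $\Conv(s)$ depends only on the signs of $\lambda$ on a lift modulo global sign, not on $\abs{\lambda}$. Indeed, $\P(\Conv(s(\Lambda)))$ is the set of $[\sum t_\xi s(\xi)]$ with $t_\xi\ge0$, which is unchanged under positive rescaling of each $s(\xi)$. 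Hence $\Conv(s') = \Conv(\sigma s)$, where $\sigma = \operatorname{sign}\lambda$ is constant on components, and we reduce to the case $\lambda = \sigma_k$ on $\Lambda_k$.

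\textbf{Step 3: realise the sign change.} Let $g$ act by $\sigma_k\,\id$ on each $V_k$ and by $\id$ on $\Vect{\Lambda}^\perp$. Because $\Vect{\Lambda}$ is non-degenerate, $\R^{p,q+1} = \bigoplus_k V_k \oplus \Vect{\Lambda}^\perp$ is an orthogonal decomposition into non-degenerate pieces. Thus $g \in \O(p,q+1)$, and $g(s(\xi)) = \sigma(\xi)s(\xi)$, so $[g]$ maps $\Conv(s)$ onto $\Conv(\sigma s) = \Conv(s')$.

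The main obstacle is the projective identification in Step 2, namely that positive rescalings do not change $\Conv$. Here one must be careful about the interior when $\Lambda$ is infinite: the cone generated is the same set, so its projectivisation and interior coincide. Non-degeneracy enters only in Step 3, to ensure $g$ is an isometry of the whole space, and in particular that the $V_k$ are non-degenerate with non-degenerate complement.
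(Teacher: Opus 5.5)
Your proof is correct and follows essentially the paper's route: the rescaling has constant sign on each connected component of $\cal{G}(\Lambda)$, each $\Vect{\Lambda_k}$ is non-degenerate, and the isometry acting by $\pm\id$ on these pieces and by $\id$ on $\Vect{\Lambda}^\perp$ identifies the two hulls. You also spell out the invariance of the projectivised cone under positive rescalings, which the paper uses silently, and you replace the paper's dimension count with the cleaner radical argument (which is also what makes the sum $\bigoplus_k \Vect{\Lambda_k}$ direct); the abandoned initial plan of matching $s'$ exactly up to sign should simply be deleted.
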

\begin{proof}
Let $s_0$ and $s_1$ be two \positif lifts of $\Lambda$. There exists $g  : \Lambda \to \R^\Lambda$ such that $s' = g s$, of constant sign $\epsilon_i \in {\pm 1}$ on each of the connected components of the graph $\cal{G}(\Lambda)$, denoted by $\Lambda_i$ ($i \in I$). Let us show by contradiction that none of the $\Vect{\Lambda_i}$ can be degenerate : otherwise, we would have
\begin{align*}
	\Vect{\Lambda} &= \Vect{\Lambda_i} + \left( \sum_{j \neq i} \Vect{\Lambda_j} \right) \\
	&\subset \Vect{\Lambda_i} + \left( \Vect{\Lambda_i}^\perp \cap \Vect{\Lambda} \right) \\
	&\subsetneq \Vect{\Lambda} \tag*{since $\Vect{\Lambda_i}$ is degenerate,}
\end{align*}
which is a contradiction. Hence $\R^{p,q+1}$ is the orthogonal direct sum of $\Vect{\Lambda}^\perp$ and the $\Vect{\Lambda_i}$. We define an isometry of $\R^{p,q+1}$ equal to $\epsilon_i \id$ on each term $\Vect{\Lambda_i}$, and preserving $\Vect{\Lambda}^\perp$. It acts on $\P(\R^{p,q+1})$ by sending $\Conv(s_0)$ to $\Conv(s)$, which concludes the proof.
\end{proof}

\textbf{Polytopes.}\label{définition:polytope}
A \emph{polytope} (in $\H^{p,q}$) is a finite subset $\Lambda$ of $\P(\R^{p,q+1})$ admitting a \positif lift and such that each of its points is extremal in a convex hull of $\Lambda$ (ie. it cannot be written as a nontrivial convex combination of points $\Lambda$ ; it is then true in every convex hull by Lemma \ref{lemme:relèvement_unique}). The points of $\Lambda$ are called the vertices of the polytope. A \emph{marked polytope} is a bijection $m$ from $\cal{S}_n = \{1,\dots,n\}$ onto (the vertices of) a polytope. We say that $\phi \in \PO(p,q+1)$ defines an isometry between the marked polytopes $m$ and $m'$ if $m' = \phi \circ m$. A polytope is called a \emph{simplex} if it has $p+q+1$ vertices spanning the vector space $\R^{p,q+1}$. Lemma \ref{lemme:relèvement_unique} shows that given a simplex $\Lambda$, its \hyperref[définition:enveloppe_convexe]{convex hull} is unique up to isometry : its volume, called volume of $\Lambda$, is denoted by $\Vol(\Lambda)$. 

\section{Cohomological viewpoint}
\label{section:classification}

In this section, we define a correspondence between isometry classes of subsets of $\P(\R^{p,q})$ and (twisted) cohomology classes of their graph. The isometry classes of simplices of graph $\cal{G}$ then correspond to an open subset of a real vector space of finite dimension, whose dimension can be read on $\cal{G}$ (Lemma \ref{lemme:dimension_H1}). We deduce from this a correspondence between geometric properties of simplices and properties of their graph, which will be used in Section \ref{subsection:existence_simplexes}. 

\subsection{Definitions}

The idea is the following : a lift $s$ of a subset $\Lambda$ of $\P(\R^{p,q})$ defines a function associating to an edge $\{\xi,\eta\}$ of $\cal{G}(\Lambda)$ the real number $\bf{b}(s(\xi),s(\eta))$, which corresponds to the classical notion of Gram matrix. A changes of lift (which we will see as a $0$-cochain) modifies the Gram matrix (which we will see as a $1$-cochain). The set of Gram matrices modulo change of lift is then naturally defined as the cohomology of a very simple chain complex. We use here the vocabulary of cohomology in order to benefit from its standard notations : no real cohomological tool will be used. 

\begin{definition}[cohomology of a graph]\label{définition:cohomologie}
	Let a graph $\cal{G} = (\cal{S},\cal{A})$ and an abelian group $(G,\star)$ (which we shall take among $\R^*$, $\Z/2\Z$ and $\R$). Let us define a cochain complex by $\rm{C}^0(\cal{G},G) = G^\cal{S}$, $\rm{C}^1(\cal{G},G) = G^\cal{A}$, $\rm{C}^*(\cal{G},G) = 0$ otherwise, the differential $d : \rm{C}^0(\cal{G},G) \to \rm{C}^1(\cal{G},G)$ being given, for a function $f \in G^{\cal{S}}$ and an edge $a$ with endpoints $\xi$ and $\eta$, by $\d f(a) = f(\xi) \star f(\eta)$. Let us denote $\cohom^k(\cal{G},G)$ its cohomology. 
\end{definition}
Any group morphism $\phi : G_1 \to G_2$ induces a morphism of complexes $\rm{C}^*(\cal{G},G_1) \to \rm{C}^*(\cal{G},G_2)$ and hence a morphism in cohomology, both denoted $\phi_*$. Likewise, a graph morphism $\psi$ determines morphisms $\psi^*$ between the associated cochain complexes and the cohomologies of the graphs.

Let now $\Lambda$ be a subset of $\P(\R^{p,q})$. Given a lift $s$ of $\Lambda$, we define a cocycle $\rm{Gram}(s) \in \rm{C}^1(\cal{G}(\Lambda),\R^*)$ by associating to an edge $\{\xi,\eta\}$ of the graph $\cal{G}(\Lambda)$ the real number $-\bf{b}(s(\xi),s(\eta)) \in \R^*$. The following lemma states that the cohomology class of $\rm{Gram}(s)$ depends only on $\Lambda$ : we shall denote it $c(\Lambda) \in \cohom^1(\cal{G}(\Lambda),\R^*)$. 
\begin{lemme}
	The set of $\rm{Gram}(s)$, for $s$ a lift of $\Lambda$, forms a cohomology class in $\cohom^1(\cal{G}(\Lambda),\R^*)$. 
\end{lemme}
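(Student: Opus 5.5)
We need to check three things: each $\rm{Gram}(s)$ is a well-defined $1$-cochain with values in $\R^*$ (and is automatically a cocycle, since $\rm{C}^2 = 0$); any two of them differ by a coboundary; and conversely, every cochain cohomologous to some $\rm{Gram}(s)$ is itself of the form $\rm{Gram}(s')$. Together these say exactly that the set $\{\rm{Gram}(s)\}$ is one full coset of $\d \rm{C}^0(\cal{G}(\Lambda),\R^*)$ in $\rm{C}^1(\cal{G}(\Lambda),\R^*)$, i.e. a single cohomology class.

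\emph{Well-definedness.} For an edge $\{\xi,\eta\}$ of $\cal{G}(\Lambda)$, the lines $\xi$ and $\eta$ are not orthogonal by definition of the edges. Hence $\bf{b}(s(\xi),s(\eta)) \neq 0$ for any non-zero representatives, so $-\bf{b}(s(\xi),s(\eta)) \in \R^*$. The value is unambiguous by symmetry of $\bf{b}$. This also covers loops $\{\xi,\xi\}$, where the value is $-\bf{b}(s(\xi),s(\xi))$.

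\emph{Changes of lift.} Given two lifts $s$ and $s'$, for each $\xi$ the vectors $s(\xi)$ and $s'(\xi)$ span the same line, so there is a unique $f(\xi) \in \R^*$ with $s'(\xi) = f(\xi)s(\xi)$. Thus $f \in \rm{C}^0(\cal{G}(\Lambda),\R^*)$, and by bilinearity
\[\rm{Gram}(s')(\{\xi,\eta\}) = f(\xi)f(\eta)\,\rm{Gram}(s)(\{\xi,\eta\}) = \d f(\{\xi,\eta\})\cdot \rm{Gram}(s)(\{\xi,\eta\}).\]
So $\rm{Gram}(s') = \d f \star \rm{Gram}(s)$, and the two cochains are cohomologous.

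\emph{Converse.} Let $f \in \R^{*\Lambda}$ be arbitrary. Then $s' = fs$ is again a lift, since $f(\xi) \neq 0$, and by the same computation $\rm{Gram}(s') = \d f \star \rm{Gram}(s)$. Hence every element of the class of $\rm{Gram}(s)$ is realised by some lift. Finally, lifts exist (choose any non-zero vector in each line, using the axiom of choice if $\Lambda$ is infinite), so the set is non-empty.

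None of these steps is a real obstacle. The only care needed is to treat loops correctly, where $\d f(\{\xi,\xi\}) = f(\xi)^2$, and to note that the definition of $\cal{G}(\Lambda)$ is precisely what guarantees non-vanishing on edges.
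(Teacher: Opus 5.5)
Your proof is correct and follows the paper's argument: fixing one lift, other lifts correspond to $0$-cochains $g$ via $s' = gs$, and bilinearity gives $\mathrm{Gram}(s') = \mathrm{Gram}(s) \star \mathrm{d}g$. The only difference is that you spell out what the paper leaves implicit in the phrase ``choosing another lift is equivalent to choosing $g$'': that $\mathrm{Gram}(s)$ is non-vanishing on edges, that every cochain in the class comes from some lift, and that lifts exist.
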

\begin{proof}
	Choosing another lift $s'$ of $\Lambda$ is equivalent to choosing the function $g : \Lambda \to \R^*$ such that $s' = g s$, which is a cocycle $g \in \rm{C}^0(\cal{G}(\Lambda),\R^*)$. We then have, given some edge $a$ with endpoints $\xi$ and $\eta$ :
	\[\rm{Gram}(s')(a) = -\bf{b}(s'(\xi),s'(\eta)) = -\bf{b}(s(\xi),s(\eta)) g(\xi) g(\eta). \]
	With the notations of Definition \ref{définition:cohomologie}, this can be written as $\rm{Gram}(s')(a) = \rm{Gram}(s)(a) \star \d g$, which concludes. 
\end{proof}
The following lemma states that a pair $(\Lambda,s)$ with $\Lambda \subset \P(\R^{p,q})$ such that $\Vect{\Lambda}$ is non-degenerate and $s$ a lift of $\Lambda$, considered modulo the action of $\PO(p,q)$, is determined by $\rm{Gram(s)}$.
\begin{lemme}\label{lemme:matrice_Gram_isométrie}
Let $\iota$ be a map from a subset $E$ of $\R^{p,q}$ into $\R^{p,q}$, such that $\Vect{E}$ and $\Vect{\iota(E)}$ are non-degenerate. The map $\iota$ extends to an element of $\O(p,q)$ if and only if one has $\bf{b}(x,y) = \bf{b}(\iota(x),\iota(y))$ for all $x,y \in E$. 
\end{lemme}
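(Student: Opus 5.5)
The plan is to prove this Witt-type extension statement directly. The forward direction is immediate, since elements of $\O(p,q)$ preserve $\bf{b}$. For the converse, assume $\bf{b}(x,y) = \bf{b}(\iota(x),\iota(y))$ for all $x,y \in E$. First extend $\iota$ linearly to a map $L : \Vect{E} \to \Vect{\iota(E)}$. Choose a basis $e_1,\dots,e_k$ of $\Vect{E}$ extracted from $E$, and define $L$ by $L(e_i) = \iota(e_i)$.

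We must check that $L(x) = \iota(x)$ for every $x \in E$. Write $x = \sum_i \lambda_i e_i$ and set $v = \iota(x) - \sum_i \lambda_i \iota(e_i)$. For every $y \in E$, bilinearity and the hypothesis give
\[\bf{b}(v,\iota(y)) = \bf{b}\Big(x - \sum_i \lambda_i e_i, y\Big) = 0.\]
So $v$ lies in $\Vect{\iota(E)} \cap \Vect{\iota(E)}^\perp$, which is $\{0\}$ because $\Vect{\iota(E)}$ is non-degenerate. The same argument shows $L$ is well defined and injective: if $\sum_i \mu_i \iota(e_i) = 0$, pairing with the $\iota(y)$ and transferring back gives $\sum_i \mu_i e_i \perp E$, hence $\sum_i \mu_i e_i = 0$ by non-degeneracy of $\Vect{E}$. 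Thus $L$ is a linear isometry $\Vect{E} \to \Vect{\iota(E)}$. It is surjective because its image contains $\iota(E)$.

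It remains to extend $L$ to all of $\R^{p,q}$. Since both subspaces are non-degenerate, $\R^{p,q} = \Vect{E} \oplus \Vect{E}^\perp = \Vect{\iota(E)} \oplus \Vect{\iota(E)}^\perp$. The isometry $L$ shows that $\Vect{E}$ and $\Vect{\iota(E)}$ have the same signature. By additivity of signature (Sylvester's law of inertia), the orthogonal complements then also have the same signature and dimension. Hence they are isometric: pick orthonormal bases of each with matching signs and map one to the other, giving an isometry $M$. Then $L \oplus M$ lies in $\O(p,q)$ and extends $\iota$.

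The main point requiring care is the first step, namely that $\iota$ is compatible with linear relations in $E$. This is where non-degeneracy of both spans is used. The final extension step is the standard Witt argument made easy by non-degeneracy.
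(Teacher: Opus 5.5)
Your proof is correct and follows essentially the same route as the paper: extract a basis of the span of $E$ from $E$, define the linear map on it, check that it is an isometry agreeing with $\iota$ on $E$, and extend it using the orthogonal complements of the two non-degenerate spans. You also prove two points the paper only asserts: that $L(x)=\iota(x)$ for every $x\in E$ (which uses non-degeneracy of the span of $\iota(E)$ and shows the $\iota(e_i)$ span it), and that the two orthogonal complements have the same signature and so are isometric.
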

\begin{proof}
Suppose that $\bf{b}(x,y) = \bf{b}(\iota(x),\iota(y))$ for all $x,y \in E$. Let $(x_i)$ be a basis of $\Vect{E}$ extracted from $E$. The family $(\iota(x_i))$ has the same Gram matrix as $(x_i)$, therefore is linearly independent and forms a basis of $\Vect{\iota(E)}$. The linear map $g : \Vect{E} \to \Vect{\iota(E)}$ defined by $g(x_i) = \iota(x_i)$ is then an isometry and satisfies $g(x) = \iota(x)$ for all $x \in E$. As $\Vect{E}$ and $\Vect{\iota(E)}$ are non-degenerate, each admits an orthogonal complement : this allows extending $g$ to an element of $\O(p,q)$. The converse is immediate.
\end{proof}

We now show that a subset $\Lambda$ of $\P(\R^{p,q})$, considered modulo the action of $\PO(p,q)$, is determined by the cohomology class $c(\Lambda) \in \cohom^1(\cal{G}(\Lambda),\R^*)$.
\begin{proposition}\label{proposition:classification_bords}
Let $\Lambda_1$ and $\Lambda_2$ be two subsets of $\P(\R^{p,q})$ spanning non-degenerate subspaces of $\R^{p,q}$. Let $\iota  : \Lambda_1 \to \Lambda_2$ be a map. The following conditions are equivalent :
\begin{enumerate}
	\item $\iota$ extends to an element of $\PO(p,q)$ ;
	\item $\iota$ defines isomorphism of graphs $\cal{G}(\Lambda_1) \to \cal{G}(\Lambda_2)$ and we have $c(\Lambda_1) = \iota^* c(\Lambda_2)$ in $\cohom^1(\cal{G}(\Lambda_1),\R^*)$.
\end{enumerate}
\end{proposition}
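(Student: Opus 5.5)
For (1) $\Rightarrow$ (2), let $\phi \in \PO(p,q)$ extend $\iota$, and choose a representative $\tilde\phi \in \O(p,q)$. Since $\tilde\phi$ preserves $\bf{b}$, two lines $\xi,\eta \in \Lambda_1$ are orthogonal if and only if $\iota(\xi),\iota(\eta)$ are. Hence $\iota$ is a bijection onto $\Lambda_2$ (it is injective as the restriction of a bijection, and I read ``$\iota : \Lambda_1 \to \Lambda_2$ extends to an element of $\PO(p,q)$'' as including surjectivity) and it preserves edges and non-edges, so it is a graph isomorphism. Next, fix any lift $s_2$ of $\Lambda_2$ and define a lift $s_1$ of $\Lambda_1$ by $s_1(\xi) = \tilde\phi^{-1}(s_2(\iota(\xi)))$. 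Then $\rm{Gram}(s_1) = \iota^*\rm{Gram}(s_2)$ edge by edge, so $c(\Lambda_1) = \iota^* c(\Lambda_2)$.

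For (2) $\Rightarrow$ (1), choose lifts $s_1, s_2$. The hypothesis says that $\rm{Gram}(s_1)$ and $\iota^*\rm{Gram}(s_2)$ differ by a coboundary $\d g$ with $g \in \rm{C}^0(\cal{G}(\Lambda_1),\R^*)$. Replacing $s_1$ by $g^{-1}s_1$, or equivalently modifying $s_2$ through $\iota$, which is allowed because $\iota$ is a bijection, we may assume $\rm{Gram}(s_1) = \iota^*\rm{Gram}(s_2)$ exactly, i.e. $\bf{b}(s_1(\xi),s_1(\eta)) = \bf{b}(s_2(\iota\xi),s_2(\iota\eta))$ for every edge $\{\xi,\eta\}$, loops included. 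For non-edges both sides vanish, because $\iota$ is a graph isomorphism and so maps non-adjacent pairs to non-adjacent pairs; in particular, vertices without loops correspond to isotropic lines. Hence the map $E = s_1(\Lambda_1) \to \R^{p,q}$, $s_1(\xi) \mapsto s_2(\iota(\xi))$, preserves $\bf{b}$ on all pairs. By hypothesis $\Vect{E} = \Vect{\Lambda_1}$ and $\Vect{s_2(\Lambda_2)}$ are non-degenerate, so Lemma \ref{lemme:matrice_Gram_isométrie} extends this map to an element of $\O(p,q)$. Its image in $\PO(p,q)$ sends each $\xi$ to $\iota(\xi)$.

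The only delicate point is the bookkeeping: the cocycle lives only on edges, so one must check that non-edges are also respected and that loops are part of $\cal{A}$, which the graph-isomorphism condition supplies. One must also note that the coboundary adjustment only rescales vectors, which preserves the projective points. I expect no real obstacle beyond this.
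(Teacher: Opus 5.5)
Your proof is correct and follows the paper's argument. In (1)$\Rightarrow$(2) you transport a lift through a linear representative of the extending isometry; you pull back a lift of $\Lambda_2$ where the paper pushes forward a lift of $\Lambda_1$, which is immaterial. In (2)$\Rightarrow$(1) you absorb the coboundary into the lift and apply the Gram-matrix extension lemma, exactly as the paper does.

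Your explicit checks are worth keeping, since the paper leaves them tacit: that non-edges and loopless (isotropic) vertices are respected, and that surjectivity of $\iota$ is implicitly needed in (1).
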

\begin{proof}
Assume that the first condition holds. Then $\iota$ induces an isomorphism $\cal{G}(\Lambda_1) \to \cal{G}(\Lambda_2)$. Moreover, given a lift $s_1$ of $\Lambda_1$ and an isometry $\phi \in \O(p,q)$ such that $[\phi] \in \PO(p,q)$ extends $\iota$, $s_2 = \phi \circ s_1$ defines a lift of $\Lambda_2$. We have $\rm{Gram}(s_1) = \iota^* \rm{Gram}(s_2) \in \rm{C}^1(\cal{G}(\Lambda_1),\R^*)$, hence $c(\Lambda_1) = \iota^* c(\Lambda_2)$.

Conversely, assume that the second condition holds. Given two lifts $s_1,s_2$ of $\Lambda_1,\Lambda_2$, we have in $\cohom^1(\cal{G}(\Lambda_1),\R^*)$ the equality $[\rm{Gram}(s_1)] = c_{\Lambda_1} = \iota^* c_{\Lambda_2} = [\iota^* \rm{Gram}(s_2)]$, hence there exists $g \in \rm{C}^0(\cal{G}(\Lambda_1),\R^*)$ such that $\rm{Gram}(s_1) \star \d g = \iota^* \rm{Gram}(s_2)$ ($\star$ being the group law of $\R^*$). We define a second lift of $\Lambda_1$ by $s_1'(\xi) = g(\xi) s_1(\xi)$, which satisfies $\rm{Gram}(s_1') = \iota^* \rm{Gram}(s_2) = \rm{Gram}(s_2 \circ \iota)$. By Lemma \ref{lemme:matrice_Gram_isométrie}, there exists an isometry $\phi \in \O(p,q)$ such that $\phi \circ s_1' = s_2 \circ \iota$. Thus $[\phi] \in \PO(p,q)$ extends $\iota$.
\end{proof}

\textbf{Decomposition in cohomology and \positif lifts.}
Define the quotient maps $\rm{sgn}  : \R^* \to \R^*/\R_{>0} = \Z/2\Z$ and $\rm{abs}  : \R^* \to \R^*/{\pm 1} = \R_{>0}$. The canonical decomposition $\R^* = \Z/2\Z \times \R_{>0}$ determines an isomorphism $\cohom^k(\cal{G},\R^*) = \cohom^*(\cal{G},\Z/2\Z) \times \cohom^k(\cal{G},\R_{>0})$ for every $k$, given by $\rm{sgn}_* \times \rm{abs}_*$. A class $c \in \cohom^k(\cal{G},\R^*)$ satisfies $\rm{sgn}_* c = 0$ if and only if it lies in the image of $\exp_*  : \cohom^k(\cal{G},\R) \to \cohom^k(\cal{G},\R^*)$. For any subset $\Lambda$ of $\P(\R^{p,q})$, we denote by $c_\pm(\Lambda) = \rm{sgn}_* c(\Lambda)$ and $c_\R = \rm{abs}_* c(\Lambda)$. In the above decomposition, one therefore has $c(\Lambda) = (c_\pm(\Lambda),c_\R(\Lambda))$.

\begin{proposition}\label{proposition:existence_relèvement_positif}
A subset $\Lambda$ of $\P(\R^{p,q})$ admits a \positif lift if and only if the cohomology class $c_\pm(\Lambda) \in \cohom^1(\cal{G}(\Lambda),\Z/2\Z)$ vanishes.
\end{proposition}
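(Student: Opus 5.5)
The proof reduces to the first lemma of Section~\ref{section:préliminaires}. That lemma shows that $\Lambda$ admits a \positif lift as soon as it admits a lift $s$ with $\bf{b}(s(\xi),s(\eta)) \leq 0$ for all $\xi,\eta \in \Lambda$. Since non-adjacent pairs satisfy $\bf{b}(s(\xi),s(\eta)) = 0$ automatically, this condition says exactly that $\rm{Gram}(s)$ takes values in $\R_{>0}$ on every edge of $\cal{G}(\Lambda)$, loops included. So the task is to show that a lift with $\rm{Gram}(s) \in \rm{C}^1(\cal{G}(\Lambda),\R_{>0})$ exists if and only if $c_\pm(\Lambda) = 0$.

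For the direct implication, let $s$ be a \positif lift. By definition, $\bf{b}(s(\xi),s(\eta)) \leq 0$ for all $\xi,\eta$, and this inequality is strict on edges, so $\rm{Gram}(s)(a) > 0$ for every edge $a$. Hence $\rm{sgn}_* \rm{Gram}(s)$ is the zero cochain in $\rm{C}^1(\cal{G}(\Lambda),\Z/2\Z)$, and $c_\pm(\Lambda) = [\rm{sgn}_* \rm{Gram}(s)] = 0$.

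For the converse, start from an arbitrary lift $s$, which exists by choice. Since $c_\pm(\Lambda) = 0$, the sign cochain $\rm{sgn}_*\rm{Gram}(s)$ is a coboundary: there is $\epsilon \in \rm{C}^0(\cal{G}(\Lambda),\Z/2\Z)$, viewed as $\epsilon : \Lambda \to \{\pm 1\}$, with $\rm{sgn}(\rm{Gram}(s)(a)) = \epsilon(\xi)\epsilon(\eta)$ for each edge $a = \{\xi,\eta\}$. Set $s' = \epsilon s$. The computation in the proof that $c(\Lambda)$ is well defined gives $\rm{Gram}(s')(a) = \rm{Gram}(s)(a)\,\epsilon(\xi)\epsilon(\eta)$, which is positive on every edge. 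In particular, at a loop $\{\xi,\xi\}$ this reads $\epsilon(\xi)^2 = 1$, so the coboundary equation forces $\rm{Gram}(s)$ to be already positive there, i.e. $\bf{b}(s(\xi),s(\xi)) < 0$. This is consistent: a loop vertex lies in $\H^{p,q}$ or in its complement, and the sign condition at loops encodes membership in $\H^{p,q}$. Therefore $\bf{b}(s'(\xi),s'(\eta)) \leq 0$ for all pairs, and the lemma upgrades $s'$ to a \positif lift, which completes the proof.

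The only step needing care is the handling of loops. The coboundary formula $\d f(a) = f(\xi)\star f(\eta)$ must be applied with $\xi = \eta$, giving $f(\xi)^2$, so that vanishing of the class imposes the correct sign condition at loop vertices. No other obstacle is expected, since the convex hull condition is entirely taken care of by the earlier lemma.
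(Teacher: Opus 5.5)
Your proof is correct and follows essentially the same route as the paper: the Gram cochains of all lifts form the class $c(\Lambda)$, and a positive lift exists exactly when this class contains a cochain with values in $\R_{>0}$, i.e.\ when $c_\pm(\Lambda)=0$. Your appeal to the first lemma of the preliminaries, which handles the requirement that $0$ not lie in the convex hull of the lifted vectors, together with your check at loops, actually makes explicit a point the paper passes over when it asserts without comment that a lift is positive if and only if its Gram cochain is positive.
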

\begin{proof}
The set of the $\rm{Gram}(s)$, with $s$ a lift of $\Lambda$, forms the cohomology class $c(\Lambda)$. Moreover, a lift $s$ is \positif if and only if $\rm{Gram}(s)$ takes values in $\R_{>0}$, that is, if $\rm{sgn}_* \rm{Gram}(s) \in \rm{C}^1(\cal{G}(\Lambda),\Z/2\Z)$ is trivial. Thus, there exists a \positif lift if and only if the cohomology class $\rm{sgn}_* c(\Lambda)$ contains the trivial cocycle, that is, if $c_\pm(\Lambda) = \rm{sgn}_* c(\Lambda) = 0$.
\end{proof}

\subsection{Topology and signature}

\label{définition:topologie_H1}
We now define a topology on the cohomology groups : we wish to define a notion of convergence corresponding to the convergence between simplices. For this, given a set $\cal{S}$, we define as follows a space $\cohom^1(\cal{S},\R^*)$ containing all the $\cohom^1(\cal{G},\R^*)$ for $\cal{G}$ a graph whose set of vertices is $\cal{S}$. We refer to Section \ref{subsection:exemples} for examples of graphs associated to simplices. 

First, define $\cal{A}$ to be the set of subsets of $\cal{S}$ with $1$ or $2$ elements and endow $\Sym(\cal{S},\R) = \R^\cal{A}$ and $(\R^*)^\cal{S}$ with the product topology. Given a graph $\cal{G}$ with vertices $\cal{S}$ and a cochain $f \in \rm{C}^1(\cal{G},\R^*)$, define $\Mat(f) \in \Sym(\cal{S},\R)$ by associating to a vertex $\{i,j\}$ the real $-f(\{i,j\})$ if it is defined and $0$ otherwise. We endow $\Sym(\cal{S},\R)$ with an action of $(\R^*)^\cal{S}$ by $[g \cdot f](s,s') = g(s) g(s') f({s,s'})$. The set of $0$-cochains $\rm{C}^0(\cal{G},\R^*) = (\R^*)^\cal{S}$ is independent from $\cal{G}$. Moreover, the map $f \mapsto \Mat(f)$ defines a bijection
\[\bigcup_{\cal{G}} \rm{C}^1(\cal{G},\R^*) \stackrel{\Mat}{\longrightarrow} \Sym(\cal{S},\R), \]
where the union is taken over all graphs $\cal{G}$ with vertex set $\cal{S}$. For such a graph $\cal{G}$, the action of $\rm{C}^0(\cal{G},\R^*)$ on $\rm{C}^1(\cal{G},\R^*)$ is thus identified with the action of $(\R^*)^\cal{S}$ on a subset of $\Sym_n(\R)$. 
\begin{definition*}
We define a topological space (with the quotient topology) by
\[\cohom^1(\cal{S},\R^*) = \Sym(\cal{S},\R)/(\R^*)^\cal{S}. \]
For any graph $\cal{G} = (\cal{S},\cal{A})$, we identify $\cohom^1(\cal{G},\R^*)$ with its image under the embedding $\cohom^1(\cal{G},\R^*) \to \cohom^1(\cal{S},\R^*)$. 
\end{definition*}
The induced topology on $\cohom^1(\cal{G},\R^*)$ is indeed the quotient topology on $\rm{C}^1(\cal{G},\R^*)/\rm{C}^0(\cal{G},\R^*)$, these spaces being identified with $(\R^*)^\cal{S}$ and $(\R^*)^\cal{A}$ endowed with the product topology. We shall for instance say, given graphs $\cal{G}, \cal{G}'$ with vertex set $\cal{S}$, that a sequence $(c_k)$ of points in $\cohom^1(\cal{G},\R^*)$ converges to $c \in \cohom^1(\cal{G}',\R^*)$ if this is the case for their images in $\cohom^1(\cal{S},\R^*)$. However, the orbits of $\Sym(\cal{S},\R)$ under the action of $(\R^*)^\cal{S}$ are not closed : thus $\cohom^1(\cal{S},\R^*)$ is not Hausdorff.

\textbf{Signature.} 
Given a simplex of $\H^{p,q}$, we wish to recover the signature $(p,q+1)$ only from its cohomology class. This leads to the following definition. Let $\cal{G}$ be a finite graph with vertex set $\cal{S}_n = \{1,\dots,n\}$. Given a cochain $f \in \rm{C}^1(\cal{G},\R^*)$, $\Mat(f)$ is naturally identified with a real symmetric matrix with coefficients $-f(\{i,j\})$ if $\{i,j\}$ is an edge, $0$ otherwise. We shall say that $f$ has signature $(p,q)$ (denoted $\rm{sgn}(f) = (p,q)$), rank $r=p+q$ or is invertible if this holds for $\Mat(f)$. 
\begin{lemme}
The signature, thus the rank are constant on each cohomology class. 
\end{lemme}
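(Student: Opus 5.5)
The plan is to make explicit how the action of $\rm{C}^0(\cal{G},\R^*)$ on $\rm{C}^1(\cal{G},\R^*)$ looks on the matrix side, and then invoke Sylvester's law of inertia. Two cochains $f, f' \in \rm{C}^1(\cal{G},\R^*)$ lie in the same cohomology class exactly when $f' = f \star \d g$ for some $g \in \rm{C}^0(\cal{G},\R^*) = (\R^*)^{\cal{S}_n}$. For every edge $\{i,j\}$ (loops included, where $i=j$) this reads $f'(\{i,j\}) = g(i) g(j) f(\{i,j\})$.

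The first step is to translate this into matrices. Let $D = \mathrm{diag}(g(1),\dots,g(n))$, which is invertible because every $g(i)$ is non-zero. Compare the $(i,j)$ entries of $\Mat(f')$ and $D\,\Mat(f)\,D$:
\begin{itemize}
\item when $\{i,j\}$ is an edge, both entries equal $-g(i)g(j)f(\{i,j\})$;
\item when $\{i,j\}$ is not an edge, both entries are $0$, since the coefficient of $\Mat(f)$ vanishes there.
\end{itemize}
The diagonal entries are handled the same way, using loops in place of edges. This gives $\Mat(f') = D\,\Mat(f)\,D = D^{\top}\Mat(f)\,D$.

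So $\Mat(f')$ and $\Mat(f)$ are congruent through an invertible matrix. By Sylvester's law of inertia, congruent real symmetric matrices have the same signature. The rank, being the sum of the two indices, is therefore also preserved, and so is invertibility. This proves that signature, rank and invertibility are constant on each class of $\cohom^1(\cal{G},\R^*)$.

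There is no real obstacle here. The only point needing care is the bookkeeping of the zero entries of $\Mat(f)$ and of the loops. Both are harmless because the diagonal scaling $D$ preserves zero entries.
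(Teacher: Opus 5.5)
Your proposal is correct and follows the paper's proof: the paper also writes $\Mat(f \star \d g) = D\,\Mat(f)\,D^{\top}$ with $D = \mathrm{diag}(g(1),\dots,g(n))$ and concludes by Sylvester's law of inertia. Your explicit check of the zero entries and the loops is a detail the paper leaves implicit.
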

\begin{proof}
Given $g \in \rm{C}^0(\cal{G},\R^*)$ and $f \in \rm{C}^1(\cal{G},\R^*)$, denoting $D$ the diagonal matrix $\diag(g(1),\dots,g(n))$, we have $\Mat(f + \d g) = D\, \Mat(f)\, \transp{D}$. Sylvester's law of inertia states that the signature of a symmetric matrix is invariant under congruence : we deduce that $f$ and $f + \d g$ have the same signature. 
\end{proof}
\begin{definition}
We define the signature of a cohomology class in $\cohom^1(\cal{G},\R^*)$ as that of its representatives. We define similarly its rank and its invertibility. The signature defines a partition of $\cohom^1(\cal{G},\R^*)$, thus of the subspace $\cohom^1(\cal{G},\R) \stackrel{\exp_*}{\hookrightarrow} \cohom^1(\cal{G},\R^*)$. 
\end{definition}

\subsection{Study of simplices via graphs}

We have classified the isometry classes of subsets $\Lambda$ of $\P(\R^{p,q})$ by cohomology classes $c(\Lambda) \in \cohom^1(\cal{G}(\Lambda),\R^*)$ (Proposition \ref{proposition:classification_bords}), the subsets admitting a \positif lift being classified by real cohomology classes (Proposition \ref{proposition:existence_relèvement_positif}). From this, one deduces a classification of simplices in $\H^{p,q}$ (Proposition \ref{proposition:classification_simplexes}). We then seek to do the converse : given a graph, we try to find a subset of $\P(\R^{p,q})$ that defines it. This yields constraints on the possible signatures $(p,q)$ (Lemma \ref{lemme:cohomologie_vers_permutation} and Corollary \ref{corollaire:permutation_vers_cohomologie}). This allows reducing questions about simplices to graph-theoretic questions (Proposition \ref{proposition:classification_cycles} and Corollary \ref{corollaire:simplexe_H22_fini}), in the spirit of the \hyperref[thm_principal]{main theorem}.

\textbf{Classification of marked simplices.}
Recall that a \hyperref[définition:polytope]{marked simplex} is a bijection $m$ from $\cal{S}_n = \{1,\dots,n\}$ onto a simplex $\Lambda$. This bijection defines an isomorphism $m$ from a graph $\cal{G}(m)$ with vertex set $\cal{S}_n$ to $\cal{G}(\Lambda)$. We then define $c_\R(m) = m^* c_\R(\Lambda)$, which lies in $\cohom^1(\cal{G}(m),\R)$. \hyperref[définition:topologie_H1]{Recall} that $\cohom^1(\cal{G}(m),\R)$ embeds into $\cohom^1(\cal{S}_n,\R)$. We thus deduce the following classification of marked simplices in $\H^{p,q}$.

\begin{proposition}\label{proposition:classification_simplexes}
Let $\cal{S} = \{1,\dots,p+q+1\}$. The map $c_\R$ defines a bijection from the set of isometry classes of marked simplices in $\H^{p,q}$ to the set of cohomology classes $c \in \cohom^1(\cal{S},\R)$ of signature $\rm{sgn}(c) = (p,q+1)$.
\end{proposition}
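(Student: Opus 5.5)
We prove the three properties of the map $c_\R$ in turn: it is well defined, it is injective, and its image is exactly the set of classes of signature $(p,q+1)$.

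\textbf{Well-definedness.} Let $m$ be a marked simplex with image $\Lambda$. By definition $\Lambda$ admits a \positif lift, so Proposition \ref{proposition:existence_relèvement_positif} gives $c_\pm(\Lambda)=0$. Hence $c(\Lambda)$ is identified with $c_\R(\Lambda)$ through $\exp_*$, and $c_\R(m)=m^*c_\R(\Lambda)$ lies in $\cohom^1(\cal{G}(m),\R)\subset\cohom^1(\cal{S},\R)$.

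Take a \positif lift $s$. The vectors $s(\xi)$, $\xi \in \Lambda$, form a basis of $\R^{p,q+1}$, since the simplex has $p+q+1$ spanning vertices. Also $\Mat(\rm{Gram}(s))$ is exactly the Gram matrix of this basis: the off-edge entries are $0$ because the corresponding lines are orthogonal. This matrix therefore has the signature $(p,q+1)$ of $\bf{b}$.

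The class is invariant under isometry of marked simplices. If $m'=\phi\circ m$ with $\phi\in\PO(p,q+1)$, Proposition \ref{proposition:classification_bords} (condition (1)$\Rightarrow$(2)) gives $c(m)=c(m')$, and in particular equal graphs $\cal{G}(m)=\cal{G}(m')$.

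\textbf{Injectivity.} Suppose $c_\R(m)=c_\R(m')$ in $\cohom^1(\cal{S},\R)$. First we recover the graph: the graph is the set of nonzero entries of $\Mat$, and this set is invariant under the $(\R^*)^\cal{S}$-action. So the embeddings $\cohom^1(\cal{G},\R^*)\hookrightarrow\cohom^1(\cal{S},\R^*)$ have disjoint images for distinct graphs, and equal classes force $\cal{G}(m)=\cal{G}(m')$.

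Next, $\iota=m'\circ m^{-1}$ is then a graph isomorphism $\cal{G}(\Lambda)\to\cal{G}(\Lambda')$. Since both sign components vanish, the full classes satisfy $c(\Lambda)=\iota^*c(\Lambda')$. Both $\Lambda$ and $\Lambda'$ span the non-degenerate space $\R^{p,q+1}$, so Proposition \ref{proposition:classification_bords} (condition (2)$\Rightarrow$(1)) produces an element of $\PO(p,q+1)$ extending $\iota$. That is, $m$ and $m'$ are isometric.

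\textbf{Surjectivity.} This is the main step. Let $c\in\cohom^1(\cal{G},\R)$ have signature $(p,q+1)$, and choose a representative cochain with positive values, so that $M=\Mat(f)$ has entries $\le 0$. The matrix $M$ is invertible of signature $(p,q+1)$, so by Sylvester there is a basis $(v_i)_{i \in \cal{S}}$ of $\R^{p,q+1}$ whose Gram matrix is $M$. Put $m(i)=[v_i]$ and $\Lambda=m(\cal{S})$. Then:
\begin{itemize}
\item $\cal{G}(m)=\cal{G}$, since nonorthogonality of the lines is read off the nonzero entries of $M$;
\item $s(m(i))=v_i$ is a lift with $\bf{b}(s(\xi),s(\eta))\le 0$ for all $\xi,\eta$;
\item $\Conv(s(\Lambda))$ does not contain $0$, because the $v_i$ are linearly independent, so $s$ is already \positif.
\end{itemize}

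It remains to check that $\Lambda$ is a polytope, i.e.\ that each point is extremal. By linear independence, no $v_i$ is a positive combination of the others, so none is a nontrivial convex combination in the projective sense. Thus $\Lambda$ is a simplex with $c_\R(m)=c$.

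The delicate point we expect is the requirement that the interior of the convex hull lies in $\H^{p,q}$. The paper builds this into the definition of \positif lifts, where it follows since any positive combination $x=\sum t_iv_i$ has $\bf{b}(x,x)=\sum t_it_j\bf{b}(v_i,v_j)\le 0$. Strict negativity in the open interior requires some diagonal or edge term to be nonzero. This holds because $M$ is invertible, so every vertex has at least one nonzero entry in its row, and in the interior all $t_i>0$.
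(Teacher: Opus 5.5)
Your proof is correct and follows the paper's route. Injectivity comes from Proposition \ref{proposition:classification_bords} together with the vanishing of $c_\pm$ for sets admitting a positive lift, and surjectivity comes from realising a nonpositive representative matrix $\Mat(\exp_* f)$ of signature $(p,q+1)$ as the Gram matrix of a basis of $\R^{p,q+1}$. You also correctly fill in points the paper leaves implicit: recovering the graph from the class, checking that the image has signature $(p,q+1)$, extremality of the vertices, and that the open hull lies in $\H^{p,q}$.
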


\begin{proof}
Injectivity is a consequence of Propositions \ref{proposition:classification_bords} and \ref{proposition:existence_relèvement_positif}. To prove surjectivity, consider $c \in \cohom^1(\cal{S},\R)$, which therefore lies in $\cohom^1(\cal{G},\R)$ for some graph $\cal{G}$ with vertex set $\cal{S}$. Choose a representative $f \in \rm{C}^1(\cal{G},\R)$ of $c$. The matrix $\Mat(\exp_* f)$ defines a non-degenerate bilinear form $\bf{b}$ on $\R^{p+q+1}$, identifying it with $\R^{p,q+1}$. The canonical basis $(e_1,\dots,e_{p+q+1})$ defines a map $m  : k \in \cal{S} \mapsto \P \{e_k\}$, whose image we denote by $\Lambda = \P\{e_1,\dots,e_{p+q+1}\}$. Since the entries of $\Mat(\exp_* f)$ lie in $\R_{\leq 0}$, we have $\bf{b}(x,x) \leq 0$ for $x \in \Conv(e_1,\dots,e_{p+q+1})$, so the map $\P \{e_k\} \in \Lambda \mapsto e_k$ defines a \positif lift of $\Lambda$ : hence $m$ is a marked simplex. By definition, it satisfies $\cal{G}(m) = \cal{G}$ and $m^* c_\R(\Lambda) = c$, which concludes the proof.
\end{proof}

\textbf{Dimension of cohomologies.}
Let $\cal{S}_n  =\{1,\dots,n\}$. By \hyperref[définition:topologie_H1]{definition}, $\cohom^1(\cal{S}_n,\R)$ minus the point corresponding to $0 \in \Sym(n,\R)$ is the image of $\P \Sym(n,\R)$ by a continuous map : therefore, $\cohom^1(\cal{S}_n,\R)$ is compact but not Hausdorff. It is partitioned into a finite number of cells: the $\cohom^1(\cal{G},\R)$ with $\cal{G}$ of vertices $\cal{S}_n$, identified with finite-dimensional vector spaces. 

\begin{lemme}\label{lemme:dimension_H1}
Let $\cal{G} = (\cal{S},\cal{A})$ be a finite graph. We have $\dim \cohom^1(\cal{G},\R) = \Card \cal{A} - \Card \cal{S} + \dim \cohom^0(\cal{G},\R)$ and $\dim \cohom^0(\cal{G},\R)$ is the number of bipartite connected components of $\cal{G}$. Similarly, we have $\dim \cohom^1(\cal{G},\Z/2\Z) = \Card \cal{A} - \Card \cal{S} + \dim \cohom^0(\cal{G},\Z/2\Z)$ and $\dim \cohom^0(\cal{G},\Z/2\Z)$ is the number of connected components of $\cal{G}$. 
\end{lemme}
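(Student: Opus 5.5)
Since the cochain complex has only two nonzero terms, $0 \to \rm{C}^0 \xrightarrow{\d} \rm{C}^1 \to 0$, the plan is to apply rank--nullity to $\d$. This gives $\dim \cohom^1 = \dim \rm{C}^1 - \operatorname{rank} \d = \Card\cal{A} - (\Card\cal{S} - \dim\ker \d)$. Here $\dim \rm{C}^1 = \Card\cal{A}$, $\dim \rm{C}^0 = \Card\cal{S}$ and $\cohom^0 = \ker \d$, which is exactly the stated formula, for both coefficient groups. The only care needed is to treat $\R$ and $\Z/2\Z$ as vector spaces over $\R$ and $\mathbb{F}_2$ respectively, with $\d$ linear.

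For $G = \R$ (written additively), $\d f(\{\xi,\eta\}) = f(\xi)+f(\eta)$. A loop $\{\xi,\xi\}$ forces $2f(\xi)=0$, so $f(\xi)=0$. Along any edge, $f(\eta) = -f(\xi)$, so on a connected component $f$ is determined by its value at one vertex $v$: $f(w) = (-1)^{\ell}f(v)$ for any walk of length $\ell$ from $v$ to $w$. A nonzero such $f$ exists if and only if all closed walks in the component have even length, that is, the component is bipartite. Note that a loop is a closed walk of length $1$, so a component with a loop is not bipartite, consistent with the first observation. Hence $\ker\d$ has one dimension per bipartite component, namely the functions $\pm 1$ on the two sides of the bipartition, and zero on non-bipartite components. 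Since $\ker \d$ splits as a direct sum over components, this gives the claim.

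For $G = \Z/2\Z$, $\d f(\{\xi,\eta\}) = f(\xi)+f(\eta)$ and signs disappear. A loop imposes $2f(\xi)=0$, which is automatic, and an edge imposes $f(\xi)=f(\eta)$. So $\ker\d$ consists of the functions constant on each connected component, and its dimension is the number of components.

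The only step needing any attention is the bipartiteness characterisation, specifically the handling of loops and the standard fact that a connected graph is bipartite if and only if it has no odd closed walk. Beyond that, the proof is routine.
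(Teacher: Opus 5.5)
Your proof is correct and follows essentially the same route as the paper: rank--nullity for the two-term complex, then a component-by-component description of the kernel of the differential, with the $\Z/2\Z$ case written out where the paper only says ``similar''. The one difference is cosmetic: you characterise bipartiteness through odd closed walks, whereas the paper obtains the bipartition directly as the sign partition $\cal{S}_\pm$ of a nonzero cocycle on a connected component.
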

\begin{proof}
The chain complex defining $\cohom^*(\cal{G},\R)$ is given by $\rm{C}^0 = \R^{\cal{S}}$, $\rm{C}^1 = \R^{\cal{A}}$, and $\rm{C}^* = 0$ otherwise. From this, we deduce $\dim \cohom^0(\cal{G},\R) - \dim \cohom^1(\cal{G},\R) = \Card \cal{S} - \Card \cal{A}$, yielding the desired equality. To prove the assertion on $\cohom^0(\cal{G},\R)$, one may assume that $\cal{G}$ is connected and then sum over the connected components. Under this assumption, if $g  : \cal{S} \to \R$ is non-zero and satisfies $\d g = 0$, then the sets $\cal{S}_+ = \{s \in \cal{S} \mid g(s) > 0\}$ and $\cal{S}_- = \{s \in \cal{S} \mid g(s) < 0\}$ define a partition of $\cal{S}$. Every $0$-cocycle is constant on each of these sets, hence a multiple of $g$ : this proves $\dim \cohom^0(\cal{G},\R) = 1$. Conversely, if $\cal{S} = \cal{S}_+ \sqcup \cal{S}_-$ is a partition into stable sets, the function $g  : \cal{S} \to \R$ taking values $\pm 1$ on $\cal{S}_\pm$ is a cocycle, and every cocycle is a multiple of $g$, which concludes the proof for the cohomology with coefficients in $\R$. The case of cohomology with coefficients in $\Z/2\Z$ is similar. 
\end{proof}

\begin{remarque}\label{remarque:nombre_enveloppes_convexes}
If a finite subset $\Lambda$ of $P(\R^{p,q})$ admits a \positif lift, then the proof of Lemma \ref{lemme:relèvement_unique} shows it has exactly $2^{\dim \cohom^0(\cal{G},\R) - 1}$ distinct convex hulls, where the $-1$ comes from projectivisation. 
\end{remarque}

\textbf{Possible signatures on a graph.}
\begin{lemme}\label{lemme:cohomologie_vers_permutation}
Let $\cal{G}$ be a graph with vertex set $\{1,\dots,n\}$. Suppose there exists an invertible cohomology class $c \in \cohom^1(\cal{G},\R^*)$. Then there exists a permutation $\sigma \in \frak{S}_n$ such that for every $k$, the vertices $k$ and $\sigma(k)$ are adjacent in $\cal{G}$. Moreover, if there exists $c \in \cohom^1(\cal{G},\R)$ invertible of signature $(p,q)$, then one can choose $\sigma$ of signature $(-1)^p$.
\end{lemme}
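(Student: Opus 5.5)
The plan is to use the Leibniz expansion of the determinant of a representative matrix. Let $f \in \rm{C}^1(\cal{G},\R^*)$ represent $c$, and let $M = \Mat(f)$. Recall that $M_{ij}$ is non-zero exactly when $\{i,j\}$ is an edge of $\cal{G}$; a diagonal entry is non-zero exactly when the vertex carries a loop. Since $c$ is invertible, we have $\det M \neq 0$. Write
\[\det M = \sum_{\sigma \in \frak{S}_n} \varepsilon(\sigma) \prod_{k=1}^n M_{k\sigma(k)}.\]
At least one term must be non-zero. The corresponding permutation $\sigma$ then satisfies $M_{k\sigma(k)} \neq 0$ for every $k$, that is, $\{k,\sigma(k)\}$ is an edge for every $k$. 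This proves the first assertion.

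For the signed version, take $c \in \cohom^1(\cal{G},\R)$ of signature $(p,q)$. Then $c$ has a representative $\exp_* f$ with $f$ real-valued. The matrix $M = \Mat(\exp_* f)$ has all its entries $\leq 0$, and strictly negative exactly on the edges. By Sylvester, $\det M$ has sign $(-1)^q$, since $M$ is invertible with $q$ negative eigenvalues.

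On the other hand, each non-zero term of the Leibniz expansion has sign $\varepsilon(\sigma)(-1)^n$, because it is a product of $n$ strictly negative entries. If every term with $\varepsilon(\sigma) = (-1)^p$ vanished, all non-zero terms would have sign $-(-1)^p(-1)^n = -(-1)^{q}$, using $n = p+q$. Their sum $\det M$ would then be either $0$ or of sign $-(-1)^q$, which contradicts the previous paragraph. Hence some non-zero term has $\varepsilon(\sigma) = (-1)^p$, and this $\sigma$ is admissible.

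The only step requiring care is the sign bookkeeping: one must check that $n=p+q$ (invertibility gives full rank) and that all non-zero entries are negative, including the loop entries on the diagonal. This is what makes all terms of a given parity share a sign, so no cancellation can occur between them.
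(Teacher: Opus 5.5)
Your proof is correct and is the paper's argument. Both expand $\det M$ by Leibniz to get an admissible permutation. Since the non-zero entries of $M$ are all negative, each non-zero term has sign $\varepsilon(\sigma)(-1)^n$, so a term matching the sign $(-1)^q$ of $\det M$ forces $\varepsilon(\sigma)=(-1)^{n+q}=(-1)^p$; your contrapositive phrasing is just a restatement of the paper's remark that some term must share the sign of the determinant.
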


\begin{proof}
Let $(m_{i,j}) = \Mat(f)$ and $\epsilon(\sigma)$ denote the signature of a permutation $\sigma$. Let $f$ be a representative of $c$. The determinant $\det \Mat(f)$ is non-zero and can be written as
\[\sum_{\sigma \in \frak{S}_n} \epsilon(\sigma) \prod_{1 \leq k \leq n} m_{k,\sigma(k)}. \]
In particular, there exists a permutation $\sigma$ for which the corresponding term $\prod_{1 \leq k \leq n} m_{k,\sigma(k)}$ is non-zero, meaning that $k$ and $\sigma(k)$ are adjacent for all $k$. Moreover, if $c \in \cohom^1(\cal{G},\R)$ is invertible of signature $(p,q)$, then $\Mat(f)$ is a symmetric matrix of signature $(p,q)$, so $\det \Mat(f)$ has the sign of $(-1)^q$. Therefore, there exists $\sigma$ such that $\epsilon(\sigma) \prod_{1 \leq k \leq n} m_{k,\sigma(k)}$ has the sign of $(-1)^q$. Since the $m_{k,\sigma(k)}$ lie in $\R_{\leq 0}$, this product has the same sign as $\epsilon(\sigma) (-1)^n$, hence $\epsilon(\sigma) = (-1)^{n+q} = (-1)^p$.
\end{proof}

To prove a converse (Corollary \ref{corollaire:permutation_vers_cohomologie}), we shall use the following fact. Given an integer $n > 0$ and real numbers $a_1,\dots,a_n < 0$, the following matrix is symmetric, hence defines a quadratic form :
\[\rm{circ}(a_1,\dots,a_n) = 
\begin{pmatrix}
0 & a_1 & 0 & \dots & a_n \\
a_1 & 0 & a_2 & \dots & 0 \\
0 & a_2 & 0 & \dots & 0 \\
\dots & \dots & \dots & \dots & a_{n-1} \\
a_n & 0 & 0 & a_{n-1} & 0
\end{pmatrix}. \]
\begin{fait}\label{fait:signature_cycle}
If $n \equiv 0 [4]$ and $\prod_{k\, \text{even}} a_k = \prod_{k\, \text{odd}} a_k$, then $\rm{circ}(a_1,\dots,a_n)$ is degenerate, with signature $(\frac{n}{2}-1,\frac{n}{2}-1)$. Otherwise, it is non-degenerate with signature $(\frac{n}{2},\frac{n}{2})$ if $n$ is even, $(\frac{n-1}{2},\frac{n+1}{2})$ if $n \equiv 1 [4]$, and $(\frac{n+1}{2},\frac{n-1}{2})$ if $n \equiv 3 [4]$. 
\end{fait}

For any integer $n \geq 2$, let $C_n$ denote the cycle of length $n$ : it is the graph whose set of vertices is $\Z/n\Z$, the edges being the $\{k,k+1\}$ for $k$ in $\Z/n\Z$. Define $C_1$ as the graph with one vertex and a loop. 
\begin{proposition}\label{proposition:classification_cycles}
The cohomology of $C_n$ is as follows. 
\begin{enumerate}
	\item If $n \equiv 0 [4]$, then $\cohom^1(C_n,\R)$ is one-dimensional and each non-zero class has signature $(\frac{n}{2},\frac{n}{2})$ and the zero class has signature $(\frac{n}{2}-1,\frac{n}{2}-1)$. 
	\item If $n \equiv 1 [4]$, then $\cohom^1(C_n,\R)$ consists of a single point, of signature $(\frac{n-1}{2},\frac{n+1}{2})$. 
	\item If $n \equiv 2 [4]$ and $n \neq 2$, then $\cohom^1(C_n,\R)$ is one-dimensional and each class has signature $(\frac{n}{2},\frac{n}{2})$. $\cohom^1(C_2,\R)$ consists of a single point, of signature $(1,1)$. 
	\item If $n \equiv 3 [4]$, then $\cohom^1(C_n,\R)$ consists of a single point, of signature $(\frac{n+1}{2},\frac{n-1}{2})$. 
\end{enumerate}
For example, if $n$ is odd, there exists a unique graph simplex of $C_n$ : it is realised with a single signature and is unique up to isometry. 
\end{proposition}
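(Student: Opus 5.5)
We first compute the dimension of $\cohom^1(C_n,\R)$ using Lemma \ref{lemme:dimension_H1}. For $n\geq 2$ the cycle $C_n$ has $n$ vertices and $n$ edges and is connected. It is bipartite exactly when $n$ is even. Hence $\dim \cohom^1(C_n,\R) = \dim\cohom^0(C_n,\R)$, which equals $1$ if $n$ is even and $0$ if $n$ is odd. The case $C_2$ needs care: with $n=2$ the edges $\{0,1\}$ and $\{1,0\}$ coincide, so the graph has $2$ vertices, one edge, and is bipartite. This gives $\dim\cohom^1 = 1-2+1 = 0$, a single point. For $C_1$ (one vertex, one loop, not bipartite) we also get $1-1+0=0$.

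Next we describe the classes concretely. A cochain $f\in \rm{C}^1(C_n,\R)$ gives $\Mat(\exp_* f) = \rm{circ}(a_1,\dots,a_n)$ with $a_k = -e^{f(\{k,k+1\})}<0$. The natural invariant under coboundaries is the alternating product $\prod_{k\text{ even}} a_k / \prod_{k\text{ odd}} a_k$, or equivalently the alternating sum $\sum_k (-1)^k f(\{k,k+1\})$. It is well defined when $n$ is even, since each vertex $k$ lies on two consecutive edges, which carry opposite signs, so $\d g$ contributes $g(k)-g(k)=0$. This linear form is nonzero, so it gives the isomorphism $\cohom^1(C_n,\R)\cong\R$ for even $n\geq 4$. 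Under this isomorphism the zero class corresponds exactly to $\prod_{\text{even}} a_k = \prod_{\text{odd}} a_k$.

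The signatures now follow from Fait \ref{fait:signature_cycle}:
\begin{itemize}
\item For $n\equiv 0\,[4]$, the zero class is degenerate of signature $(\frac n2-1,\frac n2-1)$, and nonzero classes have signature $(\frac n2,\frac n2)$.
\item For $n\equiv 2\,[4]$, every class has signature $(\frac n2,\frac n2)$.
\item For odd $n$ we read off the stated signatures directly.
\end{itemize}
The small cases are checked by hand. For $n=2$ the matrix is $\begin{pmatrix}0&a\\a&0\end{pmatrix}$, of signature $(1,1)$. For $n=1$ the matrix is $(a)$ with $a<0$, of signature $(0,1)$, which matches the formula for $n\equiv 1\,[4]$. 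The main point requiring care is the matching of the invariant with the condition in Fait \ref{fait:signature_cycle} and the degenerate loop and double-edge conventions for $n=1,2$. Since the Fait is assumed, the remainder is bookkeeping.

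For the final remark, take $n$ odd. Then $\cohom^1(C_n,\R)$ is a single point of a single signature $(p,q+1)$ with $p+q+1=n$. Proposition \ref{proposition:classification_simplexes} then gives exactly one isometry class of marked simplex with graph $C_n$, and only in that signature.
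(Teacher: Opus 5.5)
Your proof is correct and follows the paper's route: the dimension count comes from Lemma \ref{lemme:dimension_H1}, the signatures come from Fact \ref{fait:signature_cycle} (with $n=1,2$ checked by hand), and the final remark comes from Proposition \ref{proposition:classification_simplexes}. The only addition is that you make explicit what the paper leaves implicit: the alternating sum $\sum_k (-1)^k f(\{k,k+1\})$ identifies the first cohomology of $C_n$ with $\R$ for even $n\geq 4$, and its vanishing is exactly the degeneracy condition in the Fact.
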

\begin{proof}
This is a consequence of the classification of simplices (Proposition \ref{proposition:classification_simplexes}), the computation of the dimensions of the $\cohom^1$ (Lemma \ref{lemme:dimension_H1}), and their partition according to signature (obvious for $n=1$ and otherwise given by fact \ref{fait:signature_cycle}). 
\end{proof}

\begin{corollaire}\label{corollaire:permutation_vers_cohomologie}
Let $\cal{G}$ be a finite graph with vertex set $\{1,\dots,n\}$ and let $\sigma \in \frak{S}_n$ be a permutation such that $s$ and $\sigma(s)$ are adjacent for all $s \in \{1,\dots,n\}$. Let $l_k(\sigma)$ denote the number of cycles of length $k$ in the cycle decomposition of $\sigma$, and define integers $p,q$ by
\[2p = n - \sum_{k \equiv 1 [4]} l_k + \sum_{k \equiv 3 [4]} l_k,\ q = n - p. \]
There exists $c \in \cohom^1(\cal{G},\R)$ invertible of signature $(p,q)$. 
\end{corollaire}
\begin{proof}
Define a graph $\cal{G}_\sigma$ with vertex set $\{1,\dots,n\}$ and edges $\{k,\sigma(k)\}$ for $k \in \{1,\dots,n\}$. Let $\cal{G}_i$ ($i \in I$) denote the connected components of $\cal{G}_\sigma$ : these are cycles. We have
\[\cohom^1(\cal{G}_\sigma,\R^*) = \prod_i \cohom^1(\cal{G}_i,\R^*) \]
and Proposition \ref{proposition:classification_cycles} gives, for each $i$, a cohomology class $c_i \in \cohom^1(\cal{G}_i,\R^*)$ invertible with explicit signature. The product of these classes is thus a cohomology class $c \in \cohom^1(\cal{G}_\sigma,\R^*)$, whose signature is the sum of the signatures of the $c_i$ (given by Proposition \ref{proposition:classification_cycles}), equal to $(p,q)$. Since $\cal{G}_\sigma$ is a subgraph of $\cal{G}$, there exists (by \hyperref[définition:topologie_H1]{definition}) a sequence $(c_k)$ of elements of $\cohom^1(\cal{G},\R)$ converging to $c$ : intuitively, the $c_k$ correspond to simplices whose graph is $\cal{G}$, on which we let degenerate the edges not corresponding to some $(k,\sigma(k))$. For sufficiently large $k$, the signature of $c_k$ satisfies $\rm{sgn}(c_k) = (p_k,q_k) \geq (p,q)$. Moreover, since $n = p+q \geq p_k + q_k$, it follows that $\rm{sgn}(c_k) = (p,q)$, which completes the proof. 
\end{proof}

\section{Finiteness criterion on the volume}
\label{section:critère}

This section is devoted to the proof of the \textbf{\hyperref[thm_principal]{main theorem}}. We refer again to Section \ref{subsection:exemples} for concrete examples. Recall that, given a graph and a set $I$ of vertices of this graph, we denote by $\partial I$ the set of vertices which are not in $I$ but adjacent to a vertex of $I$. $I$ is said to be stable if there is no edge between vertices of $I$. 

\noindent \textbf{Main theorem.} \label{thm_principal}
\textit{Let $\Lambda$ be a simplex of $\H^{p,q}$. $\Lambda$ has infinite volume if, and only if, the graph $\cal{G}(\Lambda)$ contains a non-empty, stable set of vertices $I$ with cardinality at least $\Card \partial I$. }

\subsection{Proof}

Let us note $n = p+q+1$. The proof proceeds as follows. Lemmas \ref{lemme:volume_simplexe} and \ref{lemme:simplification_Delta} reduce the study of the volume of $\Lambda$ to that of a subset $\Delta$ of $\R_{>0}^n$. A further change of coordinates mallows writing the volume of $\Delta$ as the integral, over a convex cone $P$ of $\R^n$, of $e^\sigma$ where $\sigma$ is a linear form. From the fact that $P$ is the intersection of the half-spaces $\{(p_k) \mid p_i + p_j \leq 0\}$ with $\{i,j\}$ an edge of $\cal{G}(\Lambda)$, we infer (Lemma \ref{lemme:divergence_poids}) that this integral diverges if, and only if, there exists a non-zero $p \in P$ satisfying $\sigma(p) \leq 0$. Finally, the existence of such a $p$ is related to the graph $\cal{G}(\Lambda)$ (Lemma \ref{lemme:poids_graphe}).

To begin with, choose a \positif lift $s$ of $\Lambda$ whose image we denote by $\{x_1,\dots,x_n\}$, and define a linear map $\alpha  : \R^n \stackrel{\sim}{\longrightarrow} \R^{p,q+1}$ by $\alpha(t_1,\dots,t_n) = \sum t_k x_k$. In $\R^n$ and $\R^{p,q+1}$, identify each tangent space with $\R^n$ or $\R^{p,q+1}$ respectively. The volume forms $\vol^{\R^n}$ and $\vol^{\R^{p,q+1}}$ are related by $\alpha^*\vol^{\R^{p,q+1}} = \det(\alpha) \vol^{\R^n}$ for some non-zero real $\det(\alpha) = \vol^{\R^{p,q+1}}(x_1,\dots,x_n)$. Define a quadratic form $f$ on $\R^n$ by
\[f(t_1,\dots,t_n) = -\bf{b}(\alpha(t),\alpha(t)) = \sum -t_i t_j \bf{b}(x_i,x_j). \]
\begin{lemme}\label{lemme:volume_simplexe}
	Endow $\Hc^{p,q}$ and $\R^n_{>0}$ with the measures coming from their respective (pseudo-Riemannian) metrics. We have
	\[\Vol^{\Hc^{p,q}}(\Conv(s)) = n \abs{\det(\alpha)} \Vol^{\R^n_{>0}} \left\lbrace t \in \R^n_{>0} \mid f(t) \leq 1 \right\rbrace. \]
\end{lemme}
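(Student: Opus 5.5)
We compare the pseudo-Riemannian volume of the hyperboloid piece with the Lebesgue volume of a cone in $\R^{p,q+1}$, and then pull back by $\alpha$. Write $C = \R_{>0} \cdot \Conv(s(\Lambda))$ for the open cone over the convex hull. Since $s$ is \positif, every point of the projectivisation of $\Conv(s(\Lambda))$ lies in $\H^{p,q}$, so each ray of $C$ meets $\Hc^{p,q}$ in exactly one point, namely $x/\sqrt{-\bf{b}(x,x)}$. The subset of $\Hc^{p,q}$ that maps bijectively onto $\Conv(s)$ is therefore $\Omega = C \cap \Hc^{p,q}$. We will use the truncated cone
\[K = \{r y \mid y \in \Omega,\ 0 < r \leq 1\} = \{x \in C \mid -\bf{b}(x,x) \leq 1\}.\]

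\textbf{Step 1 (cone formula).} We show $\Vol^{\R^{p,q+1}}(K) = \frac{1}{n} \Vol^{\Hc^{p,q}}(\Omega)$. Use the polar map $\Phi : (0,1] \times \Omega \to K$, $\Phi(r,y) = r y$. At $(r,y)$ the differential sends $\partial_r$ to $y$ and $v \in \Tan_y \Hc^{p,q} = y^\perp$ to $r v$. Since $y$ is orthogonal to $y^\perp$ and $\bf{b}(y,y) = -1$, the pseudo-Riemannian volume form on $\R^{p,q+1}$ pulls back to $r^{n-1}\, dr \wedge \vol^{\Hc^{p,q}}$, in absolute value. Here we use that the volume element of the orthogonal sum $\Vect{y} \oplus y^\perp$ is the product of the volume elements, with $\abs{\bf{b}(y,y)}^{1/2} = 1$. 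Fubini then gives $\int_0^1 r^{n-1}\,dr \cdot \Vol(\Omega) = \frac{1}{n}\Vol(\Omega)$. This identity holds in $[0,\infty]$, so it is valid even when the volume is infinite.

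\textbf{Step 2 (pull back by $\alpha$).} The map $\alpha$ sends $\R^n_{>0}$ onto $C$; this is exactly the statement that $C$ is the open cone spanned by the $x_k$, which form a basis. By definition of $f$, the condition $-\bf{b}(\alpha(t),\alpha(t)) \leq 1$ reads $f(t) \leq 1$, so $\alpha^{-1}(K) = \{t \in \R^n_{>0} \mid f(t) \leq 1\}$. The relation $\alpha^* \vol^{\R^{p,q+1}} = \det(\alpha)\vol^{\R^n}$ then gives
\[\Vol(K) = \abs{\det(\alpha)}\, \Vol^{\R^n_{>0}}\{t \mid f(t) \leq 1\}.\]
Combining this with Step 1 yields the claimed formula.

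\textbf{Main obstacle.} The delicate point is the Jacobian computation in Step 1 for an indefinite metric. One must check that the measure induced by the pseudo-Riemannian metric, namely $\abs{\det g}^{1/2}$, factors correctly along the non-degenerate orthogonal splitting $\Vect{y} \oplus y^\perp$. One also has to take care with the identification of $\Conv(s)$ with $\Omega$: the interior of the projectivised hull is the image of the interior of $C \cap \Hc^{p,q}$, and boundary faces have measure zero.
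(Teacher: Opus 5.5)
\textbf{Verdict.} Your proof is correct, and it reaches the formula by a genuinely different decomposition from the paper's.

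\textbf{How the two routes differ.} The paper parametrises the lift of $\Conv(s)$ by the positive part $\S^{n-1}_{>0}$ of the Euclidean unit sphere of the parameter space $\R^n$. It does so via the radial projection $\psi(t)=f(t)^{-1/2}\alpha(t)$ and computes $\psi^*\vol^{\Hc^{p,q}} = f^{-n/2}\det(\alpha)\,\vol^{\S^{n-1}}$. It then converts $\int_{\S^{n-1}_{>0}} f^{-n/2}$ into $n\Vol\{t\in\R^n_{>0}\mid f(t)\le 1\}$ by Euclidean polar coordinates and Fubini in $\R^n$.

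You instead do the radial integration in the target $\R^{p,q+1}$, using the hyperboloid itself as the ``unit sphere''. The factor $n$ then comes from $\int_0^1 r^{n-1}\,dr$, and the factor $\abs{\det(\alpha)}$ from a single linear change of variables. In effect the two steps are performed in the opposite order.

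Both proofs rest on the same pointwise fact: $\Vect{y}\oplus y^\perp$ is an orthogonal splitting with $\abs{\mathbf{b}(y,y)}=1$. The paper encodes this as $\vol^{\Hc^{p,q}}_x=\vol^{\R^{p,q+1}}(\cdot,\dots,\cdot,x)$.

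\textbf{What each buys.} Your version avoids differentiating the nonlinear map $\psi$ and the chain of Fubini manipulations. It also proves a general cone formula valid for any open cone in the timelike region, not only simplicial ones. The paper's version has the side benefit of producing the explicit integral representation $\Vol=\abs{\det(\alpha)}\int_{\S^{n-1}_{>0}} f^{-n/2}$.

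\textbf{One small correction.} Define $C$ as the open cone $\alpha(\R^n_{>0})$ from the start. Do not define it as $\R_{>0}\cdot\Conv(s(\Lambda))$ with the claim that the whole projectivised hull lies in $\H^{p,q}$: that claim fails on the boundary, for instance at the isotropic vertices of an ideal simplex.

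What you actually need is $\mathbf{b}(\alpha(t),\alpha(t))=-f(t)<0$ for $t\in\R^n_{>0}$. This holds because every term $t_it_j\,\mathbf{b}(x_i,x_j)$ is $\le 0$, and they cannot all vanish since the Gram matrix of the basis $(x_k)$ is non-degenerate. The paper records this fact at the start of its proof as ``$f>0$ on $\R^n_{>0}$''.

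With this, the interior/boundary caveat in your last paragraph disappears. The projectivisation of $\alpha(\R^n_{>0})$ is exactly $\Conv(s)$, so no measure-zero argument is needed.
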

\begin{proof}
Notice that $f > 0$ on $\R^n_{>0}$. We define an embedding of $\S^{n-1}_{>0} = \S^{n-1} \cap \R_{>0}^n$ into $\Hc^{p,q}$, whose image is a lift of $\Conv(s)$:
\[\begin{array}{rrcl}
	\psi  : & \S^{n-1}_{>0} & \longrightarrow & \Hc^{p,q} \\
	\ & t = (t_1,\dots,t_n) & \longmapsto & f(t)^{-\frac12} \alpha(t). 
\end{array} \]
Let $s \in \S^{n-1}_{>0}$. Let $\sf{n}$ denote the unit normal vector at $s$ to $\S^{n-1}_{>0}$ (viewed as a submanifold of $\R^n$) : $\sf{n}$ is naturally identified with $s$. Then $f(t)^{-\frac12} \d \alpha (\sf{n})$ is a normal vector to $\Hc^{p,q}$ in $\R^{p,q+1}$, naturally identified with $\psi(s)$. By the definition of $\psi$, we have
\begin{equation}\label{éq:différentielle_psi}
	\d \psi(\sf{v}) = f(t)^{-\frac12}\ \d \alpha(\sf{v}) + \d(f^{-\frac12})(\sf{v})\ \alpha(s). 
\end{equation}
The volume forms of $\Hc^{p,q}$ and $\S^{n-1}$ are given by $\vol^{\Hc^{p,q}}_x = \vol^{\R^{p,q+1}}(\cdot,\dots,\cdot,x)$ (see Section \ref{section:préliminaires}) and $\vol^{\S^{n-1}}_s = \vol^{\R^n}(\cdot,\dots,\cdot,s)$. Let $\sf{v}_1,\dots,\sf{v}_{n-1} \in \Tan_s \S^{n-1}_{>0}$. We have
\begin{align*}
	&(\psi^* \vol^{\Hc^{p,q}}) (\sf{v}_1,\dots,\sf{v}_{n-1}) \\
	&= \vol^{\Hc^{p,q}}(\d\psi (\sf{v}_1),\dots,\d\psi (\sf{v}_{n-1})) \\
	&= \vol^{\R^{p,q+1}} (\d\psi (\sf{v}_1),\dots,\d\psi (\sf{v}_{n-1}),\psi(s)) \tag*{so by \eqref{éq:différentielle_psi},} \\
	&= \vol^{\R^{p,q+1}} \left( f(s)^{-\frac12}\ \d\alpha(\sf{v}_1),\dots,f(s)^{-\frac12}\ \d\alpha(\sf{v}_{n-1}),f(s)^{-\frac12}\ \d\alpha(\sf{n}) \right) \\
	&= f(s)^{-\frac{n}{2}}\ (\alpha^*\vol^{\R^{p,q+1}}) (\sf{v}_1,\dots,\sf{v}_{n-1},\sf{n}) \\
	&= f(s)^{-\frac{n}{2}}\ \det(\alpha)\ \vol^{\R^n}(\sf{v}_1,\dots,\sf{v}_{n-1},\sf{n}) \\
	&= f(s)^{-\frac{n}{2}}\ \det(\alpha)\ \vol^{\S^{n-1}}(\sf{v}_1,\dots,\sf{v}_{n-1}). 
\end{align*}
This shows that 
\[\psi^* \vol^{\Hc^{p,q}} = f(s)^{-\frac{n}{2}}\ \det(\alpha)\ \vol^{\S^{n-1}}. \]
We are reduced to studying the integrability of $\psi^* \vol^{\Hc^{p,q}}$ over $\S^{n-1}_{>0}$. This is equivalent to the convergence of
\begin{align*}
	& \int_{\S^{n-1}_{>0}} f(s)^{-\frac{n}{2}}\ \d\!\vol^{\S^{n-1}}(s) \\
	=& \Vol^{\S^{n-1} \times \R_{>0}} \left\lbrace (s,h) \in \S^{n-1}_{>0} \times \R_{>0} \mid f(s)^{-\frac{n}{2}} \geq h \right\rbrace \tag*{by Fubini} \\
	=& \int_{\R_{>0}} \Vol^{\S^{n-1}} \left\lbrace s \in \S^{n-1}_{>0} \mid f(s)^{-\frac{n}{2}} \geq h \right\rbrace \d h \tag*{by Fubini} \\
	=& \int_{\R_{>0}} \Vol^{\S^{n-1}} \left\lbrace s \in \S^{n-1}_{>0} \mid f(s) \leq h^{-\frac{2}{n}} \right\rbrace \d h \\
	=& \int_{\R_{>0}} n\ r^{n-1} \Vol^{\S^{n-1}} \left\lbrace s \in \S^{n-1}_{>0} \mid f(s) \leq r^{-2} \right\rbrace \d r \tag*{by the change of variable $r = h^{\frac{1}{n}}$} \\
	=& n \int_{\R_{>0}} r^{n-1} \Vol^{\S^{n-1}} \left\lbrace s \in \S^{n-1}_{>0} \mid f(rs) \leq 1 \right\rbrace \d r \tag*{since $f$ is homogeneous} \\
	=& n \int_{\R_{>0}} r^{n-1} \Vol^{\S^{n-1}} \left\lbrace s \in r \S^{n-1}_{>0} \mid f(s) \leq 1 \right\rbrace \d r \tag*{so by Fubini,} \\
	=& n \Vol^{\R^n} \left\lbrace t \in \R^n_{>0} \mid f(t) \leq 1 \right\rbrace. 
\end{align*}
This concludes the proof of Lemma \ref{lemme:volume_simplexe}. 
\end{proof}

Recall that the set $\cal{A}$ of edges of the graph $\cal{G}(\Lambda)$ is given by $\cal{A} = \{\{i,j\} \mid \bf{b}(x_i,x_j) < 0\}$. Let $\Delta = \left\lbrace (t_1,\dots,t_n) \in \R^n_{>0} \mid \forall {i,j} \in \cal{A} \quad t_i t_j \leq 1 \right\rbrace$.
\begin{lemme}\label{lemme:simplification_Delta}
The volume of $\left\lbrace t \in \R^n_{>0} \mid f(t) \leq 1 \right\rbrace$ is finite if and only if the volume of $\Delta$ is finite.
\end{lemme}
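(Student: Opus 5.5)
The plan is to compare $f$ on $\R^n_{>0}$ with the function $M(t) = \max_{\{i,j\}\in\cal{A}} t_i t_j$, where $\cal{A}$ includes loops $\{i,i\}$ when $\bf{b}(x_i,x_i)<0$. Since $s$ is \positif, every coefficient $-\bf{b}(x_i,x_j)$ is $\geq 0$, and it is $>0$ exactly on edges of $\cal{G}(\Lambda)$. So on $\R^n_{>0}$ the form is a sum of nonnegative terms,
\[f(t) = \sum_{\{i,j\}\in\cal{A}} m_{ij}\, t_i t_j, \qquad m_{ij} > 0. \]
Set $m = \min m_{ij}$ and $m' = \sum m_{ij}$. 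Then $m\, M(t) \leq f(t) \leq m'\, M(t)$ for all $t \in \R^n_{>0}$. Here $m_{ij}$ is $-2\bf{b}(x_i,x_j)$ off the diagonal and $-\bf{b}(x_i,x_i)$ on loops; the constants are harmless.

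This gives the inclusions
\[\{M \leq 1/m'\} \subset \{f \leq 1\} \subset \{M \leq 1/m\}. \]
Note that $\Delta = \{M \leq 1\}$; if loops are present they are counted as edges in the definition of $\Delta$, matching the definition of $\cal{A}$ with possibly equal endpoints. It remains to compare $\{M\leq c\}$ with $\Delta$. The dilation $t \mapsto \sqrt{c}\,t$ maps $\Delta$ onto $\{M \leq c\}$, because $M$ is homogeneous of degree $2$. It multiplies Lebesgue volume by $c^{n/2}$, so $\Vol\{M\leq c\} = c^{n/2}\Vol(\Delta)$, and finiteness does not depend on $c>0$. Monotonicity of volume under inclusion then gives the equivalence in both directions.

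The only point to check is the degenerate case $\cal{A} = \emptyset$, where $M$ is not defined. It cannot occur, because $\Mat$ of the Gram cochain is invertible since the $x_i$ span $\R^{p,q+1}$. More generally, each vertex lies on some edge by Lemma \ref{lemme:cohomologie_vers_permutation}, so $\cal{A}$ is non-empty. Beyond this, there is no real obstacle. The key step is the two-sided comparison, which uses only the nonnegativity of all terms, guaranteed by the \positif lift; without it, cancellations in $f$ would break the lower bound.
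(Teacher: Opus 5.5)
Your proof is correct and follows the same route as the paper: a two-sided comparison $m\,M(t) \leq f(t) \leq m'\,M(t)$ on $\R^n_{>0}$, valid because a positive lift makes every coefficient of $f$ nonnegative (and positive exactly on edges, loops included). Degree-$2$ homogeneity then compares the sublevel sets with $\Delta$, and your constants and the $\sqrt{c}$ dilation are in fact handled more carefully than the loosely chosen scaling factors in the paper's version of the same inclusions.
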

\begin{proof}
Recall that $f(t) = \sum -t_i t_j \bf{b}(x_i,x_j)$, the sum being on the couples $(i,j)$ such that $\{i,j\}$ is an edge. Define
\[\begin{array}{rrcl}
	g  : & \R^n & \longrightarrow & \R \\
	\ & t = (t_1,\dots,t_n) & \longmapsto & \sup_{\{i,j\} \in \cal{A}} t_i t_j
\end{array} \]
and choose constants $C_1,C_2$ such that for all $(i,j) \in \cal{A}$, we have $0 < C_1 \leq -\bf{b}(x_i,x_j) \leq C_2$. We hence have, for all $t \in \R^n_{>0}$ :
\begin{equation*}
	\frac{1}{\Card(\cal{A})} \frac{C_1}{C_2}\ g(t) \leq f(t) \leq 2 \Card(\cal{A}) \frac{C_2}{C_1} g(t), 
\end{equation*}
the factor $2$ being due to the fact that an edge $\{i,j\}$ with $i \neq j$ corresponds to the two pairs $(i,j)$ and $(j,i)$. Since $\Delta = \left\lbrace t \in \R^n_{>0} \mid g(t) \leq 1 \right\rbrace$, the previous estimate shows that
\begin{equation*}
	\left( \frac{1}{\Card(\cal{A})} \frac{C_1}{C_2} \right) \cdot \Delta \subset \left\lbrace t \in \R^n_{>0} \mid f(t) \leq 1 \right\rbrace \subset \left( 2 \Card(\cal{A}) \frac{C_2}{C_1} \right) \cdot \Delta. 
\end{equation*}
Thus, the finiteness of the volume of $\Delta$ is equivalent to the finiteness of the volume of $\left\lbrace t \in \R^n_{>0} \mid f(t) \leq 1 \right\rbrace$ (with respect to the volume form on $\R^n$). This proves Lemma \ref{lemme:simplification_Delta}. 
\end{proof}

Define $\sigma  : \R^n \to \R$ by $\sigma(p_1,\dots,p_n) = \sum p_k$, as well as a convex cone $P = \lbrace (p_1,\dots,p_n) \in \R^n \mid \forall \{i,j\} \in \cal{A} \quad p_i + p_j \leq 0 \rbrace$. $P$ is diffeomorphic to $\Delta$ via the map
\[\begin{array}{rcl}
\R^n & \longrightarrow & \R^n_{>0}, \\
(p_1,\dots,p_n) & \longmapsto & (e^{p_1},\dots,e^{p_n}), 
\end{array} \]
whose Jacobian at $p \in \R^n$ is $e^{\sigma(p)}$. We thus have
\begin{equation}\label{eq:Delta_P}
\vol \Delta = \int_P e^{\sigma(p)} \d p. 
\end{equation}

\begin{lemme}\label{lemme:divergence_poids}
The integral \eqref{eq:Delta_P} diverges if and only if there exists a non-zero $p \in P$ such that $\sigma(p) \geq 0$. 
\end{lemme}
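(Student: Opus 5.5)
The strategy is to use that $P$ is a closed polyhedral convex cone and $\sigma$ is linear, so that the integral $\int_P e^{\sigma}$ is governed by the behaviour of $\sigma$ along the recession directions of $P$. First I would record that $P$ is indeed a closed convex cone: it is an intersection of finitely many closed half-spaces through the origin. It may contain a nontrivial linear subspace (for instance, if $\cal{G}(\Lambda)$ has a bipartite component without loops, the vector $\pm(1,\dots,1,-1,\dots,-1)$ on that component lies in $P$ with both signs). This case needs no separate treatment, because a line in $P$ contains a nonzero vector with $\sigma \geq 0$.

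\textbf{Divergence direction.} Suppose there is a non-zero $p_0 \in P$ with $\sigma(p_0) \geq 0$. If $P$ has empty interior, then $\Delta$ would have measure zero. That cannot happen here: a simplex has non-empty convex hull, so $\{f\leq 1\}$ has non-empty interior and so does $\Delta$ by Lemma \ref{lemme:simplification_Delta}. In fact $(-1,\dots,-1)$ lies in the interior of $P$ directly. I would therefore pick a small ball $B \subset P$ centred at $(-1,\dots,-1)$. Since $P$ is a convex cone, $B + t p_0 \subset P$ for all $t \geq 0$. On $B + t p_0$ we have $\sigma \geq \min_B \sigma > -\infty$. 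The union over $t \geq 0$ of the translates $B + tp_0$ is a half-infinite cylinder of infinite Lebesgue measure. Hence
\[\int_P e^{\sigma} \geq e^{\min_B \sigma}\cdot \mathrm{Vol}\Big(\bigcup_{t\geq 0} (B+tp_0)\Big) = +\infty.\]

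\textbf{Convergence direction.} Suppose that $\sigma(p) < 0$ for every non-zero $p \in P$. Let $K = P \cap \S^{n-1}$, which is compact. Since $\sigma$ is continuous and negative on $K$, there is $c > 0$ with $\sigma(p) \leq -c\norm{p}$ for all $p \in P$, by homogeneity. Then
\[\int_P e^{\sigma(p)}\,dp \leq \int_{\R^n} e^{-c\norm{p}}\,dp < \infty.\]
This step is essentially immediate once the compactness argument is written. If $K$ is empty, then $P=\{0\}$, which contradicts the non-empty interior, so that case does not arise.

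\textbf{Main obstacle.} I expect the delicate step to be the divergence direction, specifically justifying that $P$ has non-empty interior. This is checked via $(-1,\dots,-1)$, which satisfies $p_i+p_j = -2 < 0$ for every edge, loops included. Beyond that the argument is soft. Note also that the statement says $\sigma(p) \geq 0$: the boundary case $\sigma(p_0) = 0$ must be included in the divergence, and the translation argument handles it uniformly.
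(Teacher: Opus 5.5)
Your proof is correct, and it differs from the paper's only in the convergence half.

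\textbf{Divergence half.} This is the paper's argument: sweep a bounded open subset of $P$ along the direction $p_0$, on which $\sigma$ stays bounded below. You also supply the check that $P$ has non-empty interior, via the point $(-1,\dots,-1)$, which the paper uses without comment.

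\textbf{Convergence half, the paper's route.} The paper slices $P$ by the level hyperplanes $H_t=\sigma^{-1}(t)$. Because $P$ is a cone, it gets $\vol(P\cap H_t)=\abs{t}^{n-1}\vol(P\cap H_{-1})$. Hence $\int_P e^{\sigma}$ equals $\Gamma(n)\,\vol(P\cap H_{-1})$, up to the normalising constant of the coarea formula. It then concludes from compactness of the section $P\cap H_{-1}$.

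\textbf{Convergence half, your route.} You use compactness of $P\cap\S^{n-1}$ to get the uniform decay $\sigma(p)\leq -c\norm{p}$ on $P$. You then dominate the integrand by $e^{-c\norm{p}}$.

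\textbf{Comparison.} Your route is shorter and avoids the slicing formula altogether. The paper's displayed computation is loose there with signs and constants ($t^{n-1}$ for $t<0$, a missing coarea factor), though this is harmless for finiteness. Your route also gives directly the boundedness of $P\cap H_{-1}$. The paper asserts that boundedness from closedness of $P$ alone, but it really needs an argument; it follows at once from your inequality, since $\norm{p}\leq 1/c$ on $P\cap H_{-1}$. In exchange, the paper's route gives an exact formula for $\vol\Delta$ in terms of the $(n-1)$-volume of the compact polytope $P\cap H_{-1}$. That formula is the natural starting point for quantitative volume estimates, rather than a bare finiteness statement.
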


\begin{proof}
\textbf{Convergent case. }Suppose that there does not exist a non-zero $p \in P$ such that $\sigma(p) \geq 0$. For each $t \in \R$, define an affine hyperplane of $\R^n$ by $H_t = \sigma^{-1}(t)$. We can rewrite \eqref{eq:Delta_P} as follows:
\begin{align}
	\int_{P} e^{\sigma(p)}\ \d p =& \int_{\R_{<0}} e^t \vol^{H_t} (P \cap H_t)\ \d t \tag*{by Jacobi} \nonumber \\
	=& \int_{\R_{<0}} e^t t^{n-1} \vol^{H_{-1}} (P \cap H_{-1})\ \d t \tag*{since $P$ is a cone} \nonumber \\
	=& (-1)^{n-1} \Gamma(n) \vol^{H_{-1}} (P \cap H_{-1}). \label{eq:volume_P}
\end{align}
This integral converges if and only if the volume of $P \cap H_{-1}$ is finite. The assumption that there does not exist a non-zero $p \in P$ with $\sigma(p) \geq 0$ implies that the cone $P$ is a union of half-lines intersecting the affine hyperplane $H_{-1}$. Since $P$ is moreover closed, the intersection $P \cap H_{-1}$ is compact, and therefore has finite volume : hence the integral \eqref{eq:volume_P} converges. 

\textbf{Divergent case. }Suppose there exists a non-zero $p \in \R^n$ such that $\sigma(p) \geq 0$. Let $U$ be a bounded open subset of $\R^n$ contained in $P$. Since $P$ is a convex cone, the set $\lbrace u + tp \mid u \in U,\ t \geq 0 \rbrace$ is a subset of $P$ with infinite measure on which $\sigma$ is bounded below. The integral of $e^\sigma$ over this set diverges, and therefore the integral \eqref{eq:Delta_P} also diverges. 
\end{proof}

\begin{lemme}\label{lemme:poids_entiers}
Suppose that there exists a non-zero $p \in P$ such that $\sigma(p) \geq 0$. There exists $(p^*_1,\dots,p^*_n) = p^* \in P$ non-zero such that $\sigma(p^*) \geq 0$ and $p^*_k \in \{-1,0,1\}$ for all $k$. 
\end{lemme}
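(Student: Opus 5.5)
The plan is a threshold (``layer-cake'') argument: cut $p$ at every level $\lambda>0$ to get vectors with entries in $\{-1,0,1\}$, check that each cut stays in $P$, and average over $\lambda$ to control $\sigma$.

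\textbf{Step 1: the thresholded vectors.} Let $p \in P$ be non-zero with $\sigma(p) \geq 0$, and set $M = \max_k \abs{p_k} > 0$. For each $\lambda > 0$, define $q^\lambda \in \{-1,0,1\}^n$ by
\[q^\lambda_k = \begin{cases} 1 & \text{if } p_k \geq \lambda, \\ -1 & \text{if } p_k \leq -\lambda, \\ 0 & \text{otherwise.} \end{cases}\]
For $0<\lambda \leq M$, some coordinate satisfies $\abs{p_k} \geq \lambda$, so $q^\lambda \neq 0$.

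\textbf{Step 2: each $q^\lambda$ lies in $P$.} Let $\{i,j\} \in \cal{A}$; this includes loops, where $i=j$.
\begin{itemize}
\item If $q^\lambda_i = 1$, then $p_i \geq \lambda$. The edge constraint $p_i + p_j \leq 0$ gives $p_j \leq -\lambda$, so $q^\lambda_j = -1$ and $q^\lambda_i + q^\lambda_j = 0$.
\item If $q^\lambda_i \leq 0$ and $q^\lambda_j \leq 0$, the sum is trivially $\leq 0$.
\item The remaining case $q^\lambda_j = 1$ is the first case with $i$ and $j$ exchanged.
\end{itemize}
For a loop $\{i,i\}$, the constraint is $2p_i \leq 0$. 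Hence $p_i \leq 0 < \lambda$, so $q^\lambda_i \leq 0$. In all cases $q^\lambda \in P$.

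\textbf{Step 3: averaging $\sigma$ over $\lambda$.} This is the only real point. For each $k$ we have the layer-cake identity $\int_0^{M} q^\lambda_k \, \d\lambda = p_k$. Indeed, if $p_k > 0$ the integrand equals $1$ exactly for $\lambda \leq p_k$. If $p_k < 0$ it equals $-1$ exactly for $\lambda \leq -p_k$. If $p_k=0$ it vanishes for every $\lambda>0$.

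Summing over $k$ gives
\[\int_0^M \sigma(q^\lambda)\, \d\lambda = \sigma(p) \geq 0.\]
The function $\lambda \mapsto \sigma(q^\lambda)$ is a step function on $(0,M]$ with finitely many jumps. If it were $<0$ for every $\lambda \in (0,M]$, its integral over this interval of positive length would be $<0$, a contradiction. So some $\lambda \in (0,M]$ has $\sigma(q^\lambda) \geq 0$.

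\textbf{Conclusion.} Taking $p^* = q^\lambda$ for such a $\lambda$ gives a non-zero element of $P$ with entries in $\{-1,0,1\}$ and $\sigma(p^*) \geq 0$. The only step needing care is Step 3, namely that the average over thresholds recovers $\sigma(p)$ and therefore forces some threshold to keep $\sigma \geq 0$.
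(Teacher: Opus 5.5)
Your proof is correct, and it takes a different route from the paper's.

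\textbf{The paper's route.} The paper rounds $p$ iteratively. After rescaling into $[-1,1]^n$, it shifts all fractional coordinates simultaneously by $\pm t$. Coordinates in $I_+(p)=\{k \mid 0<p_k<1\}$ and $I_-(p)=\{k \mid -1<p_k<0\}$ are pushed toward $\pm1$ when $\Card I_+(p)\ge \Card I_-(p)$, and toward $0$ when $\Card I_+(p)\le\Card I_-(p)$. It checks that each move stays in $P$ without decreasing $\sigma$. It takes $t$ maximal, so the set of fractional coordinates strictly shrinks, and after finitely many moves all coordinates lie in $\{-1,0,1\}$.

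\textbf{Your route.} You round in one shot, by thresholding at every level and averaging. Your identity $\int_0^M q^\lambda\,\mathrm{d}\lambda = p$ holds coordinatewise, hence as vectors, so it actually shows more. It gives $p/M$ as a finite convex combination of $\{-1,0,1\}$-points of $P$, so $P\cap[-1,1]^n$ has integral vertices. Linearity of $\sigma$ then does the rest. The only structural input is your Step 2: thresholding symmetrically at $\pm\lambda$ respects the constraints $p_i+p_j\le 0$. This is the classical threshold-rounding argument.

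\textbf{What each buys.} In your version, non-vanishing of $p^*$ comes for free from the restriction $\lambda\le M$. In the paper's procedure it needs care. In the tie case $\Card I_+=\Card I_-$, the move toward $0$ can annihilate every coordinate; for instance $p=(\tfrac12,-\tfrac12)$ on a single edge is sent to $(0,0)$. So one should normalise so that some coordinate already equals $\pm1$, since such coordinates are never moved. The paper's argument, for its part, is an explicit finite algorithm along which $\sigma$ is non-decreasing.
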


\begin{proof}
Given $p = (p_1,\dots,p_n) \in P$ whose coefficients satisfy $-1 \leq p_k \leq 1$, denote $I_-(p) = \{k \mid -1 < p_k < 0\}$ and $I_+(p) = \{k \mid 0 < p_k < 1\}$. If $I_-(p) \cup I_+(p)$ is not empty, for $t > 0$ sufficiently small, we modify $p$ as follows. 
\begin{itemize}
	\item If $\Card I_+(p) \geq \Card I_-(p)$, replace the $p_k$ by $p_k + t$ for $k \in I_+(p)$ (ensuring $t$ is small enough so that $p_k + t \leq 1$) and by $p_k - t$ for $k \in I_-(p)$ (ensuring $p_k - t \geq -1$). 
	\item If $\Card I_+(p) \leq \Card I_-(p)$, replace the $p_k$ by $p_k - t$ for $k \in I_+(p)$ (ensuring $p_k - t \geq 0$) and by $p_k + t$ for $k \in I_-(p)$ (ensuring $p_k + t \leq 0$). 
\end{itemize}
Moreover, by taking $t$ maximal under these constraints, one obtains a new $p' \in P$ satisfying the strict inclusion $I_-(p') \cup I_+(p') \subsetneq I_-(p) \cup I_+(p)$. Furthermore, $\sigma(p') \geq \sigma(p)$. 

Suppose then that $p  = (p_1,\dots,p_n) \in P$ is non-zero such that $\sigma(p) \geq 0$. Up to acting by $\R_{>0}$, we may assume $-1 \leq p_k \leq 1$ for all $k$. Applying the previous modification to $p$ as many times as possible, we obtain $p^*\in P$ such that $I_+(p^*) = I_-(p^*) = \varnothing$ (that is $p^*_k \in \{-1,0,1\}$) and $\sigma(p^*) \geq 0$, as desired. 
\end{proof}

Combining the previous lemmas, we obtain the following condition : $\Lambda$ has finite volume if and only if there exists a non-zero $p \in P$ such that $\sigma(p) \geq 0$, which can then be taken with coordinates in $\{-1,0,1\}$. Let us interpret this condition in terms of the graph $\cal{G}(\Lambda)$. 

\begin{lemme}\label{lemme:poids_graphe}
There exists a non-zero $p \in P$ such that $\sigma(p) \geq 0$ if and only if there exists a non-empty set $I$ of the vertex set $\Lambda$ of the graph $\cal{G}(\Lambda)$, which is stable and such that $\Card I \geq \Card \partial I$. 
\end{lemme}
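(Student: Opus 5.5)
We prove the two implications separately. The backward one is an explicit construction. The forward one uses Lemma \ref{lemme:poids_entiers} to reduce to weights in $\{-1,0,1\}$, after which it becomes a counting argument on the graph.

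\textbf{From the graph to a weight.} Let $I$ be a non-empty stable set with $\Card I \geq \Card \partial I$. Define $p$ by $p_k = 1$ for $k \in I$, $p_k = -1$ for $k \in \partial I$, and $p_k = 0$ otherwise. Then $p$ is non-zero and $\sigma(p) = \Card I - \Card \partial I \geq 0$. We check that $p \in P$ by running through the edges $\{i,j\} \in \cal{A}$, loops included.
\begin{itemize}
	\item Both endpoints cannot lie in $I$: for $i \neq j$ this is stability, and for a loop it is also stability, since a loop at a vertex of $I$ is an edge inside $I$.
	\item If $i \in I$, then $j \notin I$ and $j$ is adjacent to a vertex of $I$, so $j \in \partial I$. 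Hence $p_i + p_j = 1 - 1 = 0$.
	\item If neither endpoint lies in $I$, then $p_i, p_j \leq 0$, so $p_i + p_j \leq 0$.
\end{itemize}
Thus every edge constraint holds and $p \in P$.

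\textbf{From a weight to the graph.} Suppose some non-zero $p \in P$ has $\sigma(p) \geq 0$. By Lemma \ref{lemme:poids_entiers} we may assume every $p_k \in \{-1,0,1\}$. Set $I = \{k \mid p_k = 1\}$ and $J = \{k \mid p_k = -1\}$.
\begin{itemize}
	\item $I$ is stable. An edge $\{i,j\}$ with $i,j \in I$ (including a loop $i=j$) would give $p_i + p_j = 2 > 0$, which is impossible in $P$.
	\item $\partial I \subset J$. If $j \notin I$ is adjacent to some $i \in I$, then $p_j \leq -p_i = -1$, so $p_j = -1$.
	\item $\Card I \geq \Card \partial I$. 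We have $0 \leq \sigma(p) = \Card I - \Card J$, so $\Card I \geq \Card J \geq \Card \partial I$.
	\item $I$ is non-empty. If $I = \varnothing$, then $\sigma(p) = -\Card J \geq 0$ forces $J = \varnothing$, so $p = 0$, a contradiction.
\end{itemize}
So $I$ has all the required properties.

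\textbf{Main subtlety.} The argument is almost entirely bookkeeping. The real reduction is already done by Lemma \ref{lemme:poids_entiers}, which lets us work with integer weights. The one point needing care is loops. These occur for non-ideal vertices, and the convention that a loop at a vertex of $I$ violates stability is exactly what excludes $p_k = 1$ at such a vertex. Loops at vertices of $\partial I$ or elsewhere are harmless, since there the weight is $\leq 0$.
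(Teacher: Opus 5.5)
Your proof is correct and matches the paper's argument. You use Lemma \ref{lemme:poids_entiers} to reduce to weights in $\{-1,0,1\}$ and take $I=\{k \mid p_k=1\}$; conversely, you put weight $1$ on $I$, $-1$ on $\partial I$ and $0$ elsewhere. You also spell out points the paper leaves implicit: why $I$ is non-empty, the inclusion $\partial I \subset J$ that yields $\sigma(p)\le \Card I-\Card\partial I$, and why loops must count against stability.
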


\begin{proof}
Suppose that $p = (p_1,\dots,p_n) \in P$ is non-zero such that $\sigma(p) \geq 0$. By Lemma \ref{lemme:poids_entiers}, we may further assume that $p_i \in \{-1,0,1\}$. We define a non-empty stable set by $I = \{i \mid p_i = 1\}$, so that $p_i = -1$ for any $i \in \partial I$. We thus have
\[0 \leq \sigma(p) \leq \Card I - \Card \partial I. \]
Conversely, if a vertex set $I$ satisfies the condition in the statement, define $(p_1,\dots,p_n)$ by setting $p_i = 1$ for $i \in I$, $p_i = -1$ for $i \in \partial I$, and $p_i = 0$ otherwise. Then $p \in P$ by construction, and since $\Card I \geq \Card \partial I$, we also have $\sigma(p) \geq 0$. 
\end{proof}

\begin{proof}[Proof of the {\hyperref[thm_principal]{main theorem}}]
We have proved:
\begin{align*}
	& \text{$\Conv(s)$ has finite volume} \\
	\Leftrightarrow\ & \text{$\left\lbrace t \in \R^n_{>0} \mid f(t) \leq 1 \right\rbrace$ has finite volume} \tag{Lemma \ref{lemme:volume_simplexe}} \\
	\Leftrightarrow\ & \text{$\Delta$ has finite volume} \tag{Lemma \ref{lemme:simplification_Delta}} \\
	\Leftrightarrow\ & \text{there exists a non-zero $p \in P$ such that $\sigma(p) \geq 0$} \tag{Lemma \ref{lemme:divergence_poids}} \\
	\Leftrightarrow\ & \text{there exists $\exists I \subset \Lambda$ non-empty, stable, such that $\Card I \geq \Card \partial I$.} \tag{Lemma \ref{lemme:poids_graphe}}
\end{align*}
This concludes the proof of the theorem. 
\end{proof}

\begin{remarque}
The \hyperref[thm_principal]{main theorem} provides a criterion independent from the signature $(p,q)$. One would similarly obtain a criterion for the finiteness of the volume of simplices in $\S^{p-1,q}$, by considering the negative lifts, obtained by replacing the condition $\bf{b}(x_i,x_j) \leq 0$ with $\bf{b}(x_i,x_j) \geq 0$. 
\end{remarque}

\subsection{Existence of simplices of infinite volume}
\label{subsection:existence_simplexes}

Given a signature $(p,q)$, we seek to determine whether in $\H^{p,q}$ there exist simplices of infinite volume or ideal simplices of infinite volume. 

\begin{proposition}\label{proposition:existence_simplexes_infinis}
Let $p,q \geq 1$  integers. There exists a non-ideal simplex of infinite volume in $\H^{p,q}$. If $p \geq 3$, then there exists an ideal simplex of infinite volume in $\H^{p,q}$. 
\end{proposition}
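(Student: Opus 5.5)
The plan is to apply the \hyperref[thm_principal]{main theorem} with the simplest possible witness: a vertex $v$ whose only neighbour in $\cal{G}(\Lambda)$ is a single vertex $w \neq v$, with no loop at $v$. Then $I = \{v\}$ is non-empty and stable with $\Card I = 1 = \Card \partial I$, so the simplex has infinite volume. Since $v$ has no loop, $\bf{b}(v,v) = 0$. Such a configuration is built as an orthogonal sum.

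Let $U \subset \R^{p,q+1}$ be a hyperbolic plane spanned by a null vector $v$ and a vector $w$ with $\bf{b}(v,w) = -1$. Its orthogonal complement $U^\perp$ has signature $(p-1,q)$. Choose a basis $y_1,\dots,y_{n-2}$ of $U^\perp$ with $\bf{b}(y_i,y_j) \leq 0$ for all $i,j$. The family $(v,w,y_1,\dots,y_{n-2})$ is then a basis of $\R^{p,q+1}$ whose Gram matrix has non-positive entries. By the first lemma of Section \ref{section:préliminaires} it admits a \positif lift, and linear independence makes every point extremal, so it is a simplex. In $\cal{G}(\Lambda)$ the vertex $v$ is adjacent only to $w$, because $v \perp y_i$ and $v$ is null. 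Equivalently, in the language of Proposition \ref{proposition:classification_simplexes}, the graph is the disjoint union of the $2$-cycle $C_2$ on $\{v,w\}$ (the class of signature $(1,1)$ of Proposition \ref{proposition:classification_cycles}) with a graph carrying a class of signature $(p-1,q)$. The whole problem is therefore to find the $y_i$.

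\textbf{Non-ideal case.} Here we may take $w$ null as well, and the $y_i$ are arbitrary. If $p = 1$, let the $y_i$ be an orthonormal basis of the negative definite space $U^\perp = \R^{0,q}$. If $p \geq 2$, write $U^\perp = \R^{p-1,1} \oplus \R^{0,q-1}$. In the first factor take the vertices of a compact hyperbolic simplex of $\H^{p-1}$: these are timelike vectors with pairwise negative products. In the second factor take an orthonormal basis. This settles the first assertion for all $p,q \geq 1$.

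\textbf{Ideal case, $p \geq 3$.} Now $v$, $w$ and all the $y_i$ must be null, so we need an ideal simplex of signature $(p-1,q)$ with $p-1 \geq 2$: null vectors spanning $\R^{p-1,q}$ with pairwise non-positive products. I would assemble it as an orthogonal sum of ideal blocks. The blocks are an ideal simplex of $\H^m$ (signature $(m,1)$, available for $m \geq 2$), the cycles $C_2$ of signature $(1,1)$, $C_{4j+3}$ of signature $(2j+2,2j+1)$, $C_{4j+1}$ with $j \geq 1$ of signature $(2j,2j+1)$, and $C_{4j}$ with a non-zero class, of signature $(2j,2j)$. All of these are loop-free, hence ideal, by Proposition \ref{proposition:classification_cycles}. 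If $p-1 \geq q+1$, use $\H^{p-q}$ plus $q-1$ copies of $C_2$. If $p-1 \leq q$, start from $C_3$ or $C_5$ and add $2$-cycles.

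The main obstacle is this signature bookkeeping when $q$ is much larger than $p-1$. Each odd cycle shifts the balance between positive and negative indices by only $\pm 1$. If the orthogonal-sum blocks do not cover every $(p-1,q)$, I would fall back on Corollary \ref{corollaire:permutation_vers_cohomologie} applied to a single connected loop-free graph. I would choose a permutation $\sigma$ of the remaining vertices, a product of cycles with prescribed lengths mod $4$, so that the signature formula of the corollary yields exactly $(p-1,q)$. The degeneration argument in that corollary then provides an invertible class on any loop-free graph containing the edges $\{k,\sigma(k)\}$, and I would add the edge $\{v,w\}$ as an extra $2$-cycle of $\sigma$.
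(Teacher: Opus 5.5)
Your reduction is sound. A null vertex $v$ whose only neighbour is $w$ gives the stable set $I=\{v\}$ with $\Card I=\Card\partial I=1$. Your non-ideal construction works for all $p,q\geq 1$; it is the paper's splitting $\R^{1,1}\oplus\R^{p-1,q}$, and the paper simply concludes with the free action of the boosts of the $\R^{1,1}$ factor instead of the main theorem.

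\textbf{The gap is in the ideal case.} There you need $p+q-1$ null vectors spanning $\R^{p-1,q}$ with pairwise non-positive products. Neither of your tools produces them once $q>\frac32(p-1)$.
\begin{itemize}
\item For orthogonal sums of your blocks, the reason is that every block has negative index at most $\frac32$ times its positive index. Only $C_{4j+1}$, of signature $(2j,2j+1)$ with $j\geq 1$, has more negative than positive directions.
\item The fallback does not help. The signature produced by Corollary \ref{corollaire:permutation_vers_cohomologie} is, by construction, the sum of the signatures of the cycles of $\sigma$, so it reaches exactly the signatures of orthogonal sums of cycles. For a target $(p',q')$ on $N=p'+q'$ vertices the formula reads $q'-p'=\sum_{k\equiv 1[4]}l_k-\sum_{k\equiv 3[4]}l_k$.
\item Loop-freeness forbids fixed points of $\sigma$. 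So every cycle of length $\equiv 1\,[4]$ has length at least $5$, which forces $q'-p'\leq N/5$.
\item Concretely, for $\H^{3,4}$ you need signature $(2,4)$ on $6$ vertices. That would require two disjoint cycles of length at least $5$ among $6$ vertices, which is impossible.
\end{itemize}
So your argument does not establish the second assertion, for instance for $(p,q)=(3,4)$.

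\textbf{The missing idea is a construction that does not go through cycles.} This is what the paper does.
\begin{itemize}
\item Since $p-1\geq 2$, $\partial\H^{p-1}$ is infinite. Choose $p+q-1$ distinct null lines in a Lorentzian subspace $\R^{p-1,1}\subset\R^{p-1,q}$.
\item Lift them into the same half of the light cone. Then all their mutual products are \emph{strictly} negative.
\item Perturb them slightly inside the null cone of $\R^{p-1,q}$ so that they become linearly independent. This is possible because that cone spans $\R^{p-1,q}$, so a generic small perturbation works.
\item Strict negativity is an open condition, so it survives the perturbation.
\end{itemize}
This gives the required ideal configuration in every signature $(p-1,q)$, and its graph is the complete loop-free graph. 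It also shows that Corollary \ref{corollaire:permutation_vers_cohomologie} is only a sufficient condition and cannot be used to enumerate the realisable signatures.
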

\begin{proof}
Let us denote by $\Lambda_0$ the unique ideal simplex of $\H^{1,0} = \R$ : its vertices are the two isotropic lines of $\R^{1,1}$. Let $\Lambda_1$ be a simplex of $\H^{p-1,q-1}$. The simplices $\Lambda_0$ and $\Lambda_1$ are sets of lines of $\R^{1,1}$ and $\R^{p-1,q}$ respectively, which we consider as subspaces of the orthogonal sum $\R^{1,1} \oplus \R^{p-1,q} = \R^{p,q+1}$. Define a set of lines of $\R^{p,q+1}$ by $\Lambda = \Lambda_0 \cup \Lambda_1$. Positive lifts $s_0 : \Lambda_0 \to \R^{p,q+1}$ and $s_1 : \Lambda_1 \to \R^{p,q+1}$ define a lift $s = s_0 \cup s_1$ of $\Lambda$. We conclude by observing that $\R$ acts freely by isometries on the factor $\R^{1,1}$ while preserving $\Conv(s)$. Suppose now $p \geq 3$ : we would like $\Lambda$ to be an ideal simplex. For this, we only need that $\Lambda_1$ be an ideal simplex. Let us choose a set of $p+q-1$ distinct points of the boundary of $\H^{p-1}$, that is to say isotropic lines in $\R^{p-1,1}$. We can approximate them by a set of $p+q-1$ isotropic lines of $\R^{p-1,q}$ generating this space : this indeed defines an ideal simplex $\Lambda_1$ of $\H^{p-1,q-1}$. 
\end{proof}

In the spaces $\H^{0,q} = \R\P^q$, $\H^{p,0} = \H^p$ and $\H^{1,q}$, the question is trivial or classical : there exist ideal simplices only in $\H^{p,0} = \H^p$ ($p \geq 1$) and the volume of simplices is bounded in each of the spaces $\H^{p,0} = \H^p$ ($p \geq 2$) and $\H^{0,q} = \R\P^q$ ($q \geq 1$). $\H^{2,1} = \AdS^3$ contains an ideal simplex of infinite volume (example \ref{étape:exemple_couronne}) and the case of $\H^{2,2}$ is surprising (see Corollary \ref{corollaire:simplexe_H22_fini}). To my knowledge, it is not known whether there exist ideal simplices of infinite volume in $\H^{2,q}$ with $q \geq 3$. 

\begin{corollaire}\label{corollaire:simplexe_H22_fini}
	Every ideal simplex of $\H^{2,2}$ has finite volume. 
\end{corollaire}
\begin{proof}
	Let $\Lambda$ be an ideal simplex of $\H^{2,2}$ : it therefore has $5$ vertices by \hyperref[définition:polytope]{definition}. By Lemma \ref{lemme:cohomologie_vers_permutation}, there exists $\sigma \in \frak{S}(\Lambda)$, of signature $1$ such that $\xi$ and $\sigma(\xi)$ are adjacent in $\cal{G}(\Lambda)$. Since $\cal{G}(\Lambda)$ has no loop (because $\Lambda$ is ideal), $\sigma$ has no fixed point. If $\sigma$ decomposed as a $3$-cycle and a transposition, it would have signature $-1$ : $\sigma$ is therefore a $5$-cycle. Let $I \subset \Lambda$ be a non-empty stable subset. The fact that $\xi$ and $\sigma(\xi)$ are adjacent for all $\xi$ and that $\sigma$ is a $5$-cycle implies $\Card \partial I > \Card I$. $\Lambda$ therefore has finite volume by the \hyperref[thm_principal]{main theorem}. 
\end{proof}

We summarise this in table \ref{table:finitude_pq}, which treats arbitrary simplices (left) and ideal ones (right). An entry $(p,q)$ contains $f$ if every simplex of $\H^{p,q}$ has finite volume, $\infty$ if there exist simplices of infinite volume. 

\begin{table}[h]
	\centering
	\[\begin{array}{c}
	\begin{array}{c|ccc}
	& p=0 & p=1 & p \geq 2 \\
	\hline
	q=0 & \infty & \infty & f \\
	q \geq 1 & f & \infty & \infty \\
\end{array}
		\qquad
		\begin{array}{c|ccc}
			& p=2 & p\geq3 \\
			\hline
			q=0 & f & f \\
			q=1 & \infty & \infty \\
			q=2 & f & \infty \\
			q\geq3 & ? & \infty
		\end{array}
	\end{array}\]
	\caption{existence of simplices of infinite volume in $\H^{p,q}$ (left) and existence of ideal simplices of infinite volume in $\H^{p,q}$ (right)}
	\label{table:finitude_pq}
\end{table}

Several questions remain. We do not know whether there exists an upper bound on the volume of ideal simplices of $\H^{2,2}$. A fortiori, we do not know whether there exists a maximum, nor where it would be attained : a good candidate would be the lightlike pentagon in $\H^{2,2}$ (example \ref{exemple:pentagone}). Finally, given an integer $n \geq 3$, we do not know whether every ideal simplex of $\H^{2,n}$ has finite volume. Finally, let us state a differential-geometric corollary which is the original reason for this article. 
\begin{corollaire}
Let $S$ be a space-like surface of $\H^{2,2}$, complete, maximal and of finite total curvature. The volume of $\Conv(S)$ is finite. 
\end{corollaire}
\begin{proof}
By a theorem of Moriani (voir \cite{Mor25}), there exists a lightlike polygon $\Lambda_0$ and a \positif lift $s$ with convex hull $\Conv(s) = \Conv(S)$. Carathéodory's theorem asserts that $\Conv(s)$ is the (finite) union of the $\Conv s_{|\Lambda}$, for $\Lambda \subset \Lambda_0$ of cardinal at most $5$, which have finite volume by Corollary \ref{corollaire:simplexe_H22_fini}. 
\end{proof}

\subsection{Examples of graphs}\label{subsection:exemples}\,\\

\etape*[Ideal simplex of $\H^p$]
Let $\Lambda$ an ideal simplex of $\H^p$. The graph $\cal{G}(\Lambda)$ has $p+1$ vertices and all edges, except loops (see figure \ref{figure:graphe_exemple_1}). The volume $\Vol(\Lambda)$ of the convex hull is finite. To see this, one may apply Theorem \ref{thm_principal} : a non-empty stable subset $I \subset \Lambda$ contains a single vertex, so we have $\Card \partial I = p$, which is strictly greater than $\Card I = 1$ whenever $p \geq 2$.
\begin{figure}[hp]
	\centering
	\begin{tikzpicture}[scale=2]
		\tikzstyle{vertex}=[circle, fill=black, inner sep=2pt]
		\def\n{5}
		\foreach \i in {1,...,\n} {
			\node[vertex] (v\i) at ({360/\n * (\i-1)}:1) {};}
		\foreach \i in {1,...,\n} {
			\foreach \j in {1,...,\n} {
				\ifnum\i<\j
				\draw (v\i) -- (v\j);
				\fi}}
	\end{tikzpicture}
	\caption{Ideal simplex in $\H^{p}$ ($p=4$)}\label{figure:graphe_exemple_1}
\end{figure}

\etape*[$p$-crown in $\H^{p,p-1}$]\label{étape:exemple_couronne}
Let us consider a graph $\cal{G}$ with no loop and $2p$ vertices, each being adjacent to a unique other (see figure \ref{figure:graphe_exemple_2}). By Proposition \ref{proposition:classification_cycles}, $\cohom^1(\cal{G},\R^*)$ is reduced to a point, of signature $(p,p)$ : this graph is therefore realised only by a simplex $\Lambda_p$ of $\H^{p,p-1}$ (modulo isometric embedding of $\H^{p,p-1}$ into another pseudo-hyperbolic space). $\Lambda_p$ was defined in \cite{Barbot13} for $p=2$ and \cite{BK25} in general, where it is called a $p$-crown. $\Lambda$ has $2^{p-1}$ convex hulls (remark \ref{remarque:nombre_enveloppes_convexes}) and $\R^p$ acts by isometries on each convex hull $\cal{C}$. This action preserves the unique complete maximal $p$-submanifold contained in $\cal{C}$, and is simply transitive on this $p$-submanifold. The volume of $\cal{C}$ in $\H^{p,p-1}$ is therefore infinite. This also follows from the \hyperref[thm_principal]{main theorem}: every non-empty stable set of vertices $I$ contains at most one vertex in each connected component and satisfies $\Card \partial I = \Card I$.
\begin{figure}[hp]
	\centering
	\begin{tikzpicture}[scale=1.2]
		\tikzstyle{vertex}=[circle, fill=black, inner sep=2pt]
		\def\n{5}
		\foreach \i in {1,...,\n} {
			\node[vertex] (a\i) at (2*\i,0) {};
			\node[vertex] (b\i) at (2*\i,1) {};
			\draw (a\i) -- (b\i);}
	\end{tikzpicture}
	\caption{$2p$-crown in $\H^{p,p-1}$ ($p=5$)}\label{figure:graphe_exemple_2}
\end{figure}

\etape*[Lightlike pentagon in $\H^{2,2}$]\label{exemple:pentagone}
We consider the $5$-cycle $C_5$ (see figure \ref{figure:graphe_exemple_3}). By Proposition \ref{proposition:classification_cycles}, $\cohom^1(\cal{G},\R)$ is reduced to a point, of signature $(2,3)$ : this graph is therefore realised by a unique simplex $\Lambda$, which is a simplex of $\H^{2,2}$ (modulo isometric embedding of $\H^{2,2}$ into another pseudo-hyperbolic space). $\Lambda$ is the lightlike pentagon of $\H^{2,2}$ and admits a unique convex hull $\cal{C}$ (remark \ref{remarque:nombre_enveloppes_convexes}), which contains a unique complete maximal surface. The volume of the convex hull is finite by the \hyperref[thm_principal]{main theorem}. Indeed, if $I$ is a non-empty stable set of vertices, then either $\Card I = 1$ and $\Card \partial I = 2$, or $\Card I = 2$ and $\Card \partial I = 3$.
\begin{figure}[hp]
	\centering
	\begin{tikzpicture}[scale=2]
		\tikzstyle{vertex}=[circle, fill=black, inner sep=2pt]
		\foreach \i in {1,...,5}{
			\node[vertex] (v\i) at ({72*(\i-1)+90}:1) {};}
		\draw (v1)--(v3)--(v5)--(v2)--(v4)--(v1);
	\end{tikzpicture}
	\caption{Lightlike pentagon in $\H^{2,2}$}\label{figure:graphe_exemple_3}
\end{figure}

\etape*[Non-ideal simplex of infinite volume in $\H^{2,2}$]\label{étape:simplexe_non_idéal_infini}
This graph (figure \ref{figure:graphe_exemple_5}) is that of a simplex of $\H^{2,2}$, non-ideal and of infinite volume, constructed in the proof of Proposition \ref{proposition:existence_simplexes_infinis}. 
\begin{figure}[hp]
\centering
\begin{tikzpicture}[scale=1.5]
	\tikzstyle{vertex}=[circle, fill=black, inner sep=2pt]
	\node[vertex] (v1) at (0,1) {};
	\node[vertex] (v2) at (0,0) {};
	\draw (v1) -- (v2);
	\node[vertex] (v3) at (2,1) {};
	\node[vertex] (v4) at (2,0) {};
	\draw (v3) -- (v4);
	\node[vertex] (v5) at (4,0.5) {};
	\draw (v5) to[out=60,in=120,looseness=12] (v5);
\end{tikzpicture}
\caption{Non-ideal simplex of infinite volume in $\H^{2,2}$}\label{figure:graphe_exemple_5}
\end{figure}

\FloatBarrier

\bibliographystyle{alpha}
\bibliography{bibli}

\end{document}